\definecolor{red}{RGB}{255,0,0}
\definecolor{green}{RGB}{0,150,0}
\definecolor{blue}{RGB}{0,0,255}  
\crefname{equation}{equation}{equations}
\crefname{figure}{Figure}{Figures}
\newtheorem{thm}{Theorem}[section]
\theoremstyle{definition}
\newtheorem{example}[thm]{Example}
\theoremstyle{remark}
\newtheorem{remark}[thm]{Remark} 
\numberwithin{equation}{section} 
\DeclareMathOperator{\re}{Re}
\renewcommand{\Re}{\mathop{\rm Re}}
\renewcommand{\Im}{\mathop{\rm Im}}
\DeclareMathOperator{\supp}{supp} 
\DeclareMathOperator{\const}{const}
\DeclareMathOperator{\cp}{cap}
\newcommand{\mc}[1]{\mathcal{#1}}
\newcommand{\mbb}[1]{\mathbb{#1}}
\newcommand{\N}{\mathbb{N}}
\newcommand{\C}{\mathbb{C}}
\renewcommand{\P}{\mathbb{P}}
\newcommand{\R}{\mathbb{R}}
\title[Do orthogonal polynomials dream of symmetric curves?]{Do orthogonal polynomials dream of symmetric curves?}
\author[A.~Mart\'{\i}nez-Finkelshtein]{A.~Mart\'{\i}nez-Finkelshtein}
\address[AMF]{Department of Mathematics, University of Almer\'{\i}a, Almer\'{\i}a, Spain}
\email{andrei@ual.es}
\author[E.~A.~Rakhmanov]{E.~A.~Rakhmanov}
\address[EAR]{Department of Mathematics, University of South Florida, Tampa (FL), USA}
\email{rakhmano@mail.usf.edu}
\keywords{Non-hermitian orthogonality, zero asymptotics, logarithmic potential theory, extremal problems, equilibrium on the complex plane, critical measures, $S$-property, quadratic differentials.}
\subjclass[2010]{Primary: 42C05; Secondary: 26C10; 30C25; 31A15; 41A21}
\begin{document}

\begin{abstract} 
The complex or non-Hermitian orthogonal polynomials with analytic weights are ubiquitous in several areas such as approximation theory, random matrix models, theoretical physics and in numerical analysis, to mention a few. 
Due to the freedom in the choice of the integration contour for such polynomials, the location of their zeros is a priori not clear. Nevertheless, numerical experiments, such as those presented in this paper, show that the zeros not simply cluster somewhere on the plane, but persistently choose to align on certain curves, and in a very regular fashion.

The problem of the limit zero distribution for the non-Hermitian orthogonal polynomials  
is one of the central aspects of their theory. Several important results in this direction have been obtained, especially in the last 30 years, and describing them is one of the goals of the first parts of this paper. However, the general theory is far from being complete, and many natural questions remain unanswered or have only a partial explanation.

Thus, the second motivation of this paper is to discuss some ``mysterious'' configurations of zeros of polynomials, defined by an orthogonality condition with respect to a sum of exponential functions on the plane, that appeared as a results of our numerical experiments. In this apparently simple situation the zeros of these orthogonal polynomials may exhibit different behaviors: for some of them we state the rigorous results, while others are presented as conjectures (apparently, within a reach of modern techniques). Finally, there are cases for which it is not yet clear how to explain our numerical results, and where we cannot go beyond an empirical discussion. 
\end{abstract}

\maketitle


\section{Introduction} \label{sec:historical}

One of the motivations of this paper is to discuss some ``mysterious'' configurations of zeros of polynomials, defined by an orthogonality condition with respect to a sum of exponential functions on the plane, that appeared as a results of our numerical experiments. It turned out that in this apparently simple situation the orthogonal polynomials may exhibit a behavior which existing theoretical models do not explain, or the explanation is not straightforward. In order to make our arguments self-contained, we present a brief outline of the fundamental concepts and known results and discuss their possible generalizations.

The so-called complex or non-Hermitian orthogonal polynomials with analytic weights appear in approximation theory as denominators of rational approximants to analytic functions 
\cite{Gonchar:87,MR1130396} and in the study of continued fractions. 
Recently, non-Hermitian orthogonality found applications in several new areas, for instance in the description of the 
rational solutions to  Painlev\'e equations \cite{Bertola/Bothner14,Balogh15}, in theoretical physics  \cite{MR3090748,MR3234135,MR3388766,Bleher/Deano13,MR3071662} and in numerical analysis \cite{MR2736880}. 

Observe that due to analyticity, there is a freedom in the choice of the integration contour for the non-Hermitian orthogonal polynomials, which means that the location of their zeros is a priori not clear. 
The problem of their limit zero distribution 
is one of the central aspects studied in the theory of orthogonal polynomials, especially in the last few decades. Several important general results in this direction have been obtained, and describing them is one of the goals of the first parts of this paper. However, the general theory is far from being complete, and many natural questions remain unanswered or have only a partial explanation, as some of the examples presented in the second part of this work will illustrate. We will deal with one of the simplest situation that is still posing many open questions.

Complex non-Hermitian orthogonal polynomials are denominators of the diagonal Pad\'e approximants to functions with branch points and thus play a key role in the study of the asymptotic behavior of these approximants, in particular, in their convergence. Since the mid-twentieth century convergence problems for Pad\'e approximants have been attracting wide interest, and consequently, complex orthogonal polynomials have become one of the central topics in analysis and approximation theory. There is a natural historical parallel of this situation with the one occurred in the middle of the ninetieth century, when Pad\'e approximants (studied then as continued fractions) for Markov- and Stieltjes-type functions led to  the introduction of general orthogonal polynomials on the real line. The original fundamental theorems by P.~Chebyshev, A.~Markov and T.~Stieltjes on the subject gave birth to the theory of general orthogonal polynomials.

In 1986 Stahl \cite{MR88d:30004a,Stahl:86} proved a fundamental theorem explaining the geometry of configurations of zeros of non-Hermitian orthogonal polynomials and presented an analytic description of the curves ``drawn'' by the strings of zeros. Those curves are important particular cases of what we now call $S$-curves. They may be defined by the symmetry property of their Green functions or as trajectories of some quadratic differential. 

The fact that the denominators of the  diagonal Pad\'e approximants to an analytic function at infinity  satisfy non-Hermitian orthogonality relations is straightforward and was definitely known in the nineteenth century. Just nobody believed that such an orthogonality could be used to study the properties Pad\'e denominators. Stahl's theorem  showed that complex orthogonality relations may be  effectively used for these purposes, at least for functions with a ``small'' set of singular points, some of them being branch points. This, without any doubt, was a beginning of a new theory of orthogonal polynomials. 

Before the work of Stahl, asymptotics of these polynomials was studied for some subclasses of functions and by appealing to their additional properties. For instance, several important results were obtained by Gonchar \cite{Gonchar73,gonchar:1975} and collaborators in 1970s and the beginning of the 1980s. The geometry of their zero distribution was conjectured (and in partially proved, e.g., for hyperelliptic functions) by J.~Nuttall and collaborators \cite{MR0613842,MR0487173}. Later, in \cite{MR891770}, the case when the logarithmic derivative of the approximated function is   rational (the so-called semiclassical or Laguerre class) was analyzed. The associated orthogonal polynomials, known as the semiclassical or generalized Jacobi polynomials,  satisfy a second-order differential equation, and the classical Liouville--Green (a.k.a~WKB) method may be used to study their strong asymptotics, as it was done by Nuttall, see also a recent paper \cite{MR2964145}. 

Stahl's ideas were considerably extended by Gonchar and Rakhmanov \cite{Gonchar:87} to cover the case when the orthogonality weight depends on the degree of the polynomial, which requires the inclusion of a non-trivial external field (or background potential) in the picture. The curves, describing the location of the strings of zeros of the orthogonal polynomials, feature a symmetry property (the $S$-property, so we call them the $S$-curves), and their geometry is much more involved. The resulting Gonchar--Rakhmanov--Stahl (or GRS) theory, founded by \cite{Gonchar:87,MR88d:30004a,Stahl:86}, allows to formulate statements about the asymptotics of the zeros of complex orthogonal polynomials conditional to the existence of the $S$-curves, which is a non-trivial problem from the geometric function theory. Further contributions in this direction, worth mentioning here, are \cite{MR2737770,MR2854178,KuijlaarsSilva2015,Rakhmanov:2012fk}.

The notion of the $S$-property can be interpreted also in the light of the Deift-Zhou's nonlinear steepest descent method for the Riemann--Hilbert problems \cite{MR2000g:47048}. One of the key steps in the asymptotic analysis is the deformation of the contours, the so-called lens opening, along the  level sets of certain functions. It is precisely the $S$-property of these sets which guarantees that the contribution   on all non-relevant contours becomes asymptotically negligible.

An important further development was a systematic investigation of the critical measures,  presented in \cite{MR2770010}. Critical measures are a wider class that encompasses the equilibrium measures on $S$-curves, see Sect.~\ref{sec:GRS} for the precise definition and further details. One of the contributions of \cite{MR2770010} was the description of their supports in terms of trajectories of certain quadratic differentials (this description for the equilibrium measures with the $S$-property is originally due to Stahl \cite{MR99a:41017}). In this way, the problem of existence of the appropriate $S$-curves is reduced to the question about the global structure of such trajectories. 

Let us finish by describing the content of this paper. 
Section~\ref{sec:examples}  is a showcase of some zero configurations of polynomials of complex orthogonality, appearing in different settings. The presentation is mostly informal, it relies on some numerical experiments, and its goal is mainly to illustrate the situation and eventually to arouse the reader's curiosity.

Sect.~\ref{sec:logpotential} contains a brief overview of some basic definitions from the logarithmic potential theory,
necessary for  the subsequent  discussion, as well as some simple applications of these notions to polynomials. This section is essentially introductory, and a knowledgeable reader may skip it safely. 

In Sect.~\ref{sec:GRS} we present the known basic theorem on asymptotics of complex orthogonal polynomials. We simplify settings as much as possible without losing essential content. The  definitions and results contained here constitute the core of what we call the \emph{GRS theory}.  Altogether, Sects.~\ref{sec:examples}--\ref{sec:GRS} are expository. 

Finally, in Sections~\ref{subsec6.7} and \ref{sec:6} we present some recent or totally new results. For instance, Section~\ref{subsec6.7} is about the so-called vector critical measures, which find applications in the analysis of the Hermite--Pad\'e approximants of the second kind for a couple of power series at infinity of a special form, as well as in the study of the problems tackled in Sect.~\ref{sec:6}. This last section of the paper deals with the orthogonality with respect to \textit{a sum} of two (or more) analytic weights. In order to build some intuition, we present another set of curious numerical results in Sect.~\ref{sec:numerical experiments}, and the title of this paper (partially borrowed from Philip K.~Dick) is motivated by the amazing variety and beauty of possible configurations. As the analysis of these experiments shows even for the simplest model, corresponding to the sum of two exponential weights, in some domain in the parameter space of the problem the standard GRS theory still  explains the observed behavior, while in other domains it needs to be modified or adapted, which leads to some new equilibrium problems. Finally, there are regions in the parameter space where it is not yet clear how to  generalize the GRS theory to explain our numerical results, and we cannot go beyond an empirical discussion.

\section{Zeros showcase} \label{sec:examples}

We start by presenting some motivating examples that should illustrate the choice of the title of this work.

\subsection{Pad\'e approximants}

Pad\'e approximants \cite{MR2963451,MR1383091} are the {\it locally best rational approximants of a power series}; in a broader sense, they are constructive rational approximants  {\it with free poles}.

Let $\mathbb P_n$ denote the set of algebraic polynomials with complex coefficients and degree $\leq n$, and let 
\begin{equation}
\label{eq16}
{f}=\sum_{k=0}^\infty\frac{c_k}{z^{k+1}}
\end{equation}
be a (formal) power series. For any arbitrary non-negative integer $n$ there always exist polynomials~$P_n\in \P_{n-1}$ and~$Q_n\in \mathbb P_n$, $Q_n\not\equiv0$,
satisfying the condition
\begin{equation}
\label{eq17}
R_n(z):=(-P_n+Q_n{f})(z)=\mathcal O\left (\frac{1}{z^{n+1}}\right),\qquad
z \to\infty.
\end{equation}
This equation is again formal and means that $R_n$ (called the \emph{remainder}) is a power series in descending powers of~$z$, starting at least at $z^{-n-1}$. In order to find polynomials~$P_n$ and $Q_n$ we first use condition \eqref{eq17} to determine the coefficients of $Q_n$, after which $P_n$ is just the truncation of $Q_n{f}$ at the terms of nonnegative degree. It is easy to see that condition \eqref{eq17} does not determine the pair $(P_n, Q_n)$ uniquely. Nevertheless, the corresponding rational function $\pi_n(\cdot; f):=P_n/Q_n$ is unique, and it is known as the \emph{(diagonal) Pad\'e approximant to $f$ at $z=\infty$ of degree $n$}.

Hence, the denominator $Q_n$ is the central object in the construction of diagonal Pad\'e approximants, and its poles constitute the main obstruction to convergence of $\pi_n$ in a domain of the complex plane $\C$.

With this definition we can associate a formal orthogonality verified by the denominators $Q_n$ (see, e.g., a recent survey \cite{MR2247778} and the references therein). However, the most interesting theory is developed when $f$ is an analytic germ at infinity. Indeed, if \eqref{eq16} converges for $|z|>R$, then choosing a Jordan closed curve $C$ in $\{z\in \C:\, |z|>R \}$ and using the Cauchy theorem we conclude that
$$
\oint_C z^k Q_n(z) f(z)dz=0, \quad k=0, 1, \dots, n-1.
$$
This condition is an example of a \emph{non-Hermitian orthogonality} satisfied by the denominators $Q_n$.

In particular interesting is the case when $f$ corresponds to an algebraic (multivalued) function, being approximated by intrinsically single-valued rational functions $\pi_n$. As an illustration we plot in Figure~\ref{Fig1Pade} the poles of $\pi_{150}$ for the analytic germs at infinity of two functions,
\begin{equation}
\label{fPadeexample}
f_1(z)=\frac{(z^2-1)^{1/3}}{(z-0.4+0.8i)^{2/3}} \quad \text{and} \quad f_2(z)=\frac{(z^2-1)^{1/5}(z+0.8+0.4i)^{1/5}}{(z-0.4+0.8i)^{3/5}}, 
\end{equation}
both normalized by $f_1(\infty)=f_2(\infty)=1$. These functions belong to the so-called Laguerre class (or are also known as ``semiclassical''): their logarithmic derivatives are rational functions.

\begin{figure}[h]
	\centering 
	\hspace{-3mm} \begin{tabular}{cc}
		\begin{overpic}[scale=0.65]{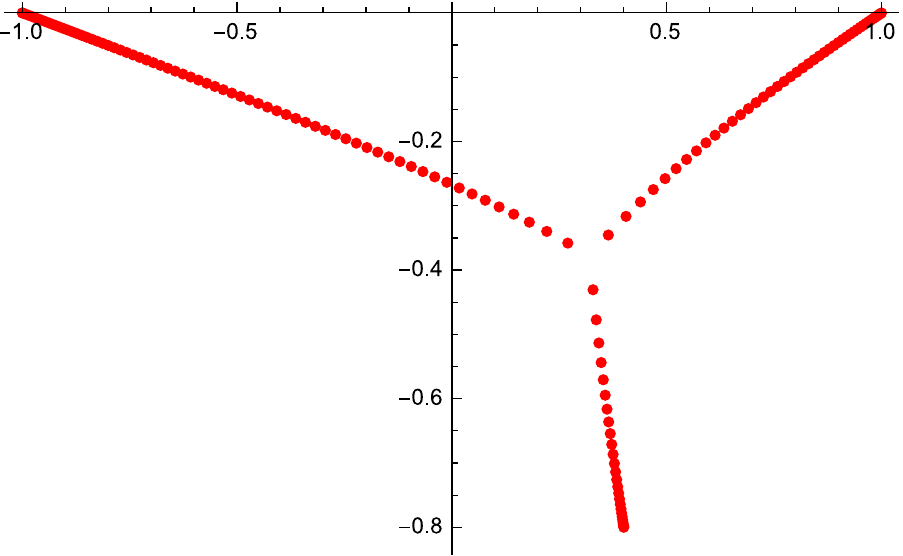}
		\end{overpic} & 
		\begin{overpic}[scale=0.65]{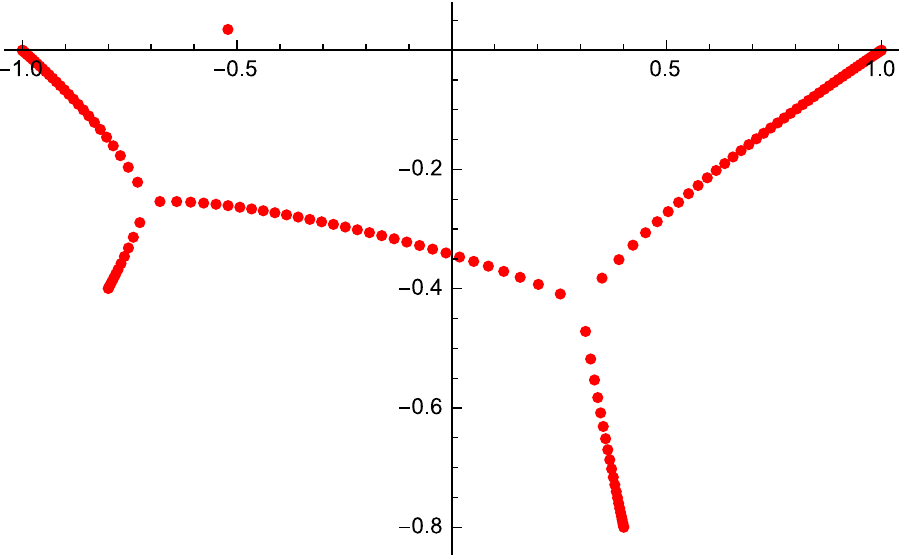}
		\end{overpic}
	\end{tabular}
	\caption{Poles of the Pad\'e approximant $\pi_{150}$ for $f_1$ (left) and $f_2$ (right), see \eqref{fPadeexample}.}\label{Fig1Pade}
\end{figure}
A quick examination of the pictures puts forward two phenomena:
\begin{enumerate}
\item[(i)] generally, poles of $\pi_n$ distribute on $\C$ in a rather regular way. Our eye cannot avoid ``drawing'' curves along which the zeros align almost perfectly.
\item[(ii)] there are some exceptions to this beautiful order: observe a clear outlier on Figure~\ref{Fig1Pade}, right. These ``outliers'' are known as the \emph{spurious poles} of the Pad\'e approximants.
\end{enumerate}

\subsection{Jacobi polynomials}

This is the ``most classical'' family of polynomials, which includes the Chebyshev polynomials as a particular case. They can be defined explicitly (see \cite{MR2723248,szego:1975}) ,
\begin{equation} \label{jacobiDef}
P_{n}^{(\alpha ,\beta )}\left( z\right) =2^{-n}\sum_{k=0}^{n}\left( 
\begin{array}{c}
n+\alpha \\ 
n-k%
\end{array}%
\right) \left( 
\begin{array}{c}
n+\beta \\ 
\ k%
\end{array}%
\right) \left( z-1\right) ^{k}\left( z+1\right) ^{n-k},
\end{equation}
or, equivalently, by the well-known Rodrigues formula
\begin{equation} \label{RodrJac}
P_{n}^{(\alpha ,\beta) }\left( z\right) =\frac{1}{2^{n}n!}\left( z-1\right)
^{-\alpha }\left( z+1\right) ^{-\beta }\left( \frac{d}{dz}\right) ^{n}\left[
\left( z-1\right) ^{n+\alpha }\left( z+1\right) ^{n+\beta }\right] .
\end{equation}

Incidentally, Jacobi polynomials could have been considered in the previous section: for $\alpha, \beta>-1$ they are also denominators of the diagonal Pad\'e approximants (at infinity) to the function
\begin{equation}\label{fForJac}
f(z)=\int_{-1}^1 \frac{(1-t)^\alpha (1+t)^\beta}{t-z}dt.
\end{equation}
In fact, denominators of the diagonal Pad\'e approximants to semiclassical functions as in \eqref{fPadeexample} are known as \emph{generalized Jacobi polynomials}, see \cite{MR891770,MR2964145,Aptekarev:2011fk}.

Clearly, polynomials $P_{n}^{(\alpha ,\beta) }$ are entire functions of the complex parameters $\alpha
,\beta  $. When $\alpha, \beta>-1$ they are orthogonal on $[-1,1]$ with respect to the positive weight $(1-z)^\alpha (1+z)^\beta$, 
\begin{equation}\label{orthogonalityJacobi}
\int_{-1}^1 z^k  P_{n}^{(\alpha ,\beta) }(z) (1-z)^\alpha (1+z)^\beta dz=0, \quad k=0, 1, \dots, n-1,
\end{equation}
and in consequence their zeros are all real, simple, and belong to $(-1,1)$. 

What happens if at least one of the parameters $\alpha, \beta$ \/ is ``non-classical''? In~Figure~\ref{fig:onlyzeros} we depicted the zeros of $P_{n}^{(\alpha ,\beta) }$ for $n=50$, $\alpha = -55 + 5i$ and $\beta=50$. We can appreciate again the same feature: a very regular distribution of the zeros along certain imaginary lines on the plane.

\begin{figure}[htb]
\centering \begin{overpic}[scale=0.7]{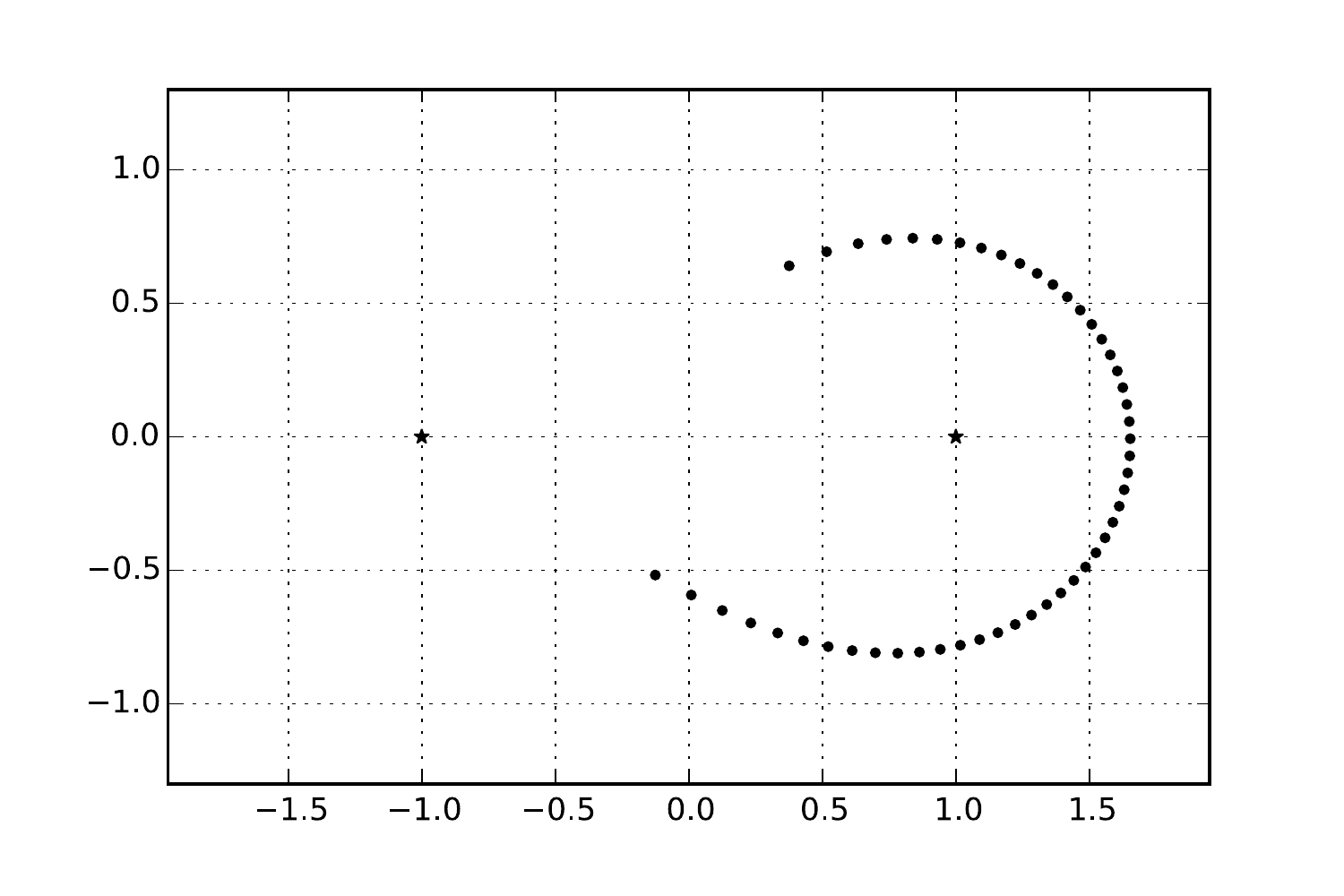}%
\put(29,31){\small $-1 $}
\put(68,33){\small $1 $}
\end{overpic}
\caption{Zeros of the polynomial $P_{50}^{(\alpha ,\beta) }$ for $\alpha = -55 + 5i$ and $\beta=50$.}
\label{fig:onlyzeros}
\end{figure}

\subsection{Heine--Stieltjes polynomials} \label{sec:HSpolyns}

These are a natural generalization of the Jacobi polynomials.
Given a set of pairwise distinct points fixed on the complex plane $\C$,
\begin{equation}\label{A}
\mathcal A=\{a_0, a_1, \dots, a_p\},
\end{equation}
($p\in \N$), and two polynomials,
\begin{equation}\label{defAandB}
    A(z)=\prod_{i=0}^p (z-a_i)\,, \qquad B(z)=\alpha z^p+\text{lower
degree terms} \in \P_p\,, \quad \alpha \in \C ,
\end{equation}
we are interested in the polynomial solutions of the
\emph{generalized Lam\'{e} differential} equation (in algebraic form),
\begin{equation}
\label{DifEq}
A(z)\, y''(z)+ B(z)\, y'(z)- n(n+\alpha -1) V_n(z)\, y(z)=0,
\end{equation}
where $V_n\in \P_{p-1}$; if $\deg V=p-1$, then $V$ is monic.
An alternative perspective on the same problem can be stated in terms of the second order differential operator
$$
\mathcal L[y](z) := A(z)\, y''(z)+ B(z)\, y'(z),
$$
and the associated generalized spectral problem (or multiparameter eigenvalue problem, see \cite{Volkmer1988}),
\begin{equation}\label{ODEclassic}
   \mathcal L[y](z)=n(n+\alpha -1) V_n(z)\, y(z)\,, \quad  n \in \N,
\end{equation}
where $V_n \in \P_{p-1}$ is the ``spectral polynomial''.

If we take $p=1$ in \eqref{DifEq}, we are back in the case of Jacobi polynomials (hypergeometric differential equation). For $p=2$ we get the Heun's equation, which still attracts interest and poses open questions (see \cite{Ronveaux95}). Moreover, denominators of Pad\'e approximants for semiclassical functions are also Heine--Stieltjes polynomials, see e.g.~\cite{MR891770,MR2964145}.

Heine \cite{Heine1878} proved that for every $n \in \N$ there exist
at most
\begin{equation}\label{sigma}
\sigma(n)=\binom{n+p-1}{n}
\end{equation}
different polynomials $V_n$ such that \eqref{DifEq} (or \eqref{ODEclassic}) admits a polynomial solution $y=Q_n \in \P_{n}$. These particular $V_n$ are called \emph{Van Vleck polynomials}, and the corresponding
polynomial solutions $y=Q_n$ are known as \emph{Heine-Stieltjes} (or simply Stieltjes) polynomials; see  \cite{Shapiro2008a} for further details (Fig.~\ref{fig:zerosHS}).

\begin{figure}[htb]
	\centering \begin{overpic}[scale=0.7]{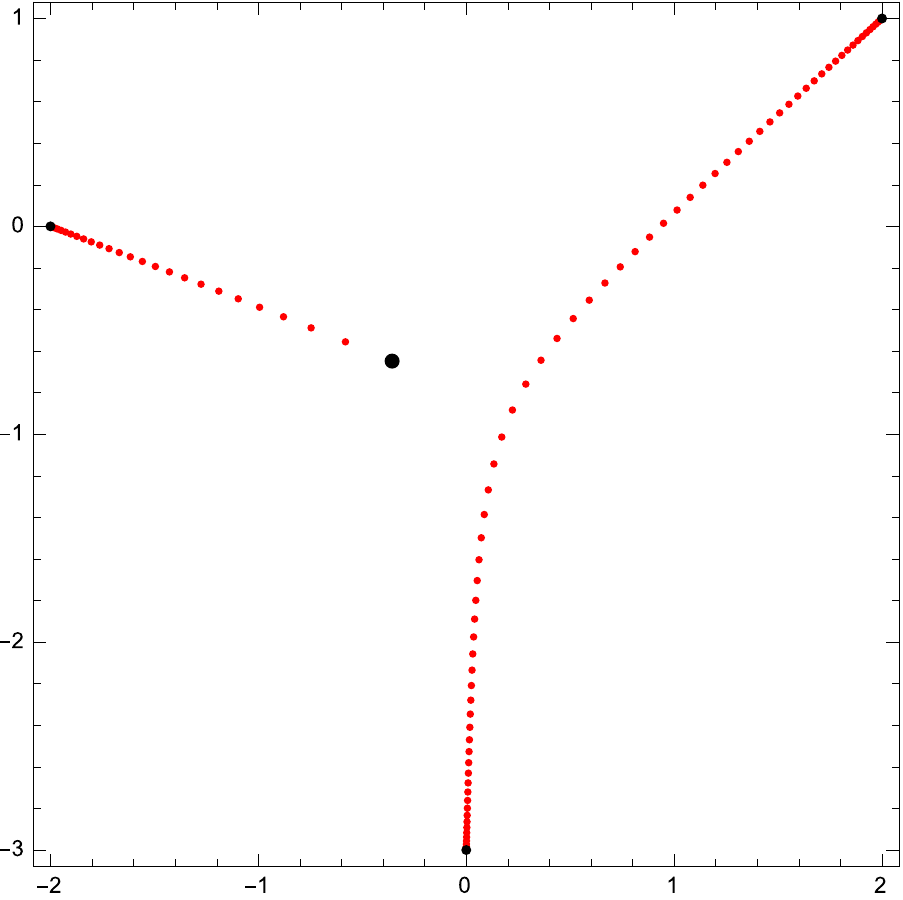}%
	\end{overpic}
	\caption{Zeros of a Heine-Stieltjes polynomial of degree $100$ for $\mathcal A=\{ -3 i, -2, 2 + i\}$. The fat dot is the zero of the corresponding Van Vleck polynomial (of degree $1$).}
	\label{fig:zerosHS}
\end{figure}

Stieltjes discovered an electrostatic interpretation of zeros of the polynomials discussed in \cite{Heine1878}, which attracted general attention to the problem.
He studied the problem \eqref{DifEq} in a particular setting, assuming that $\mathcal A \subset \R$ and that all residues $\rho_k$ in \begin{equation}\label{BoverA}
    \frac{B(x)}{A(x)}=
  \sum_{k=0}^p \frac{\rho_k}{x-a_k}
\end{equation}
are strictly positive (which is equivalent to the assumption that the zeros of $A$
alternate with those of $B$ and that the leading coefficient of
$B$ is positive).
He proved in \cite{Stieltjes1885} (see also \cite[Theorem
6.8]{szego:1975}) that in this case for each $n \in \N$ there are \emph{exactly}
$\sigma(n)$ different Van Vleck polynomials of degree $p-1$ and
the same number of corresponding Heine-Stieltjes polynomials $y$ of
degree $n$, given by all possible ways how the $n$ zeros of $y$
can be distributed in  the $p$ open intervals defined by $\mathcal A$. Obviously, this models applies also in the case $p=1$, i.e.~to the zeros of a Jacobi polynomial. For a more detailed discussion see \cite{MR2345246}, although we describe the electrostatic model of Stieltjes for $p=1$ in the next Section.

\subsection{Hermite--Pad\'e approximants} \label{sec:HPapprox}

Let us return to the Pad\'e approximants. The situation gets more involved if we consider now a \emph{pair} of power series at infinity, say
\begin{equation}
\label{eq16HP}
{f_1}=\sum_{k=0}^\infty\frac{c_k^{(1)}}{z^{k+1}}, \qquad {f_2}=\sum_{k=0}^\infty\frac{c_k^{(2)}}{z^{k+1}},
\end{equation}
fix a non-negative integer $n$, and seek \emph{three} polynomials, $Q_{n,0}$, $Q_{n,1}$ and $Q_{n,2}$, with all $Q_{n,k} \in \P_n$  and not all  $Q_{n,k}\equiv 0$, such that 
\begin{equation} 
R_n(z):=\bigl(Q_{n,0} +Q_{n,1}f_1+Q_{n,2}f_2\bigr)(z) 
=\mathcal O \left( \frac1{z^{2 (n+1)}}\right), \quad z\to\infty;
\label{1.1a}
\end{equation}
$Q_{n,k}$ are the  \emph{type I Hermite--Pad\'e (HP) polynomials}, corresponding to the pair $(f_1, f_2)$ (or more precisely, to the vector $\bm f=(1, f_1, f_2)$), and function $R_n$ defined in \eqref{1.1a} is  again the remainder of the Hermite--Padé approximation to $\bm f$. 

Hermite used in 1858 a construction slightly more general than \eqref{1.1a}, involving $f_k(z) = e^{k/z}$, in order to prove that the number $e$ is transcendental.  HP polynomials play an important role in analysis and have significant  applications in approximation theory, number theory, random matrices, mathematical physics, and other fields. For details and further references see~\cite{MR2963451, MR2963452, MR1478629, MR769985, MR2796829, MR90h:30088, MR2247778}.

\begin{figure}[h]
	\centering 
	\begin{overpic}[scale=1.1]{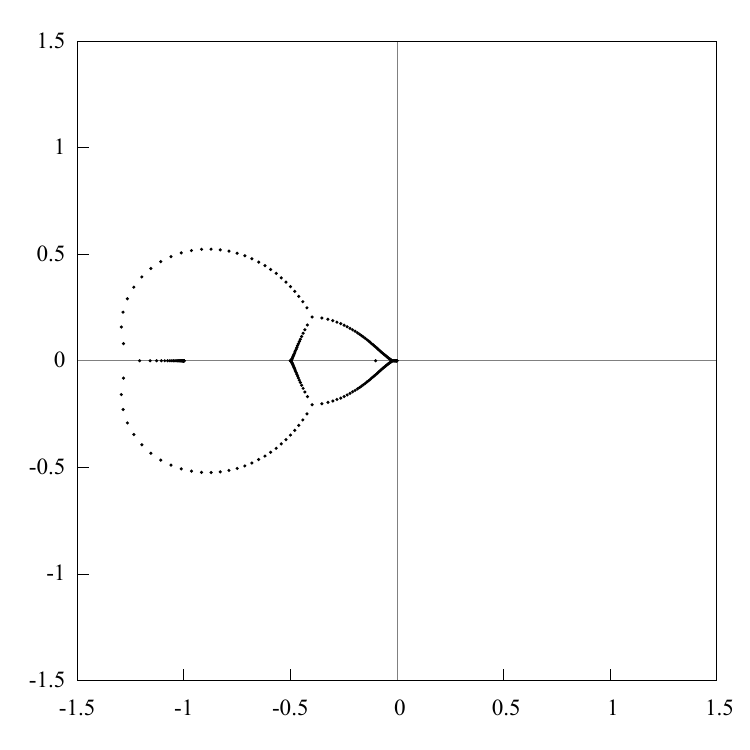}
	\end{overpic} 
	\caption{Zeros of $Q_{250,2}$ for $\bm f=(1, f_1, f_2)$, see \eqref{fHermitePadeexample} (picture courtesy of S.~P.~Suetin).}
	\label{Fig1HermitePade}
\end{figure}

HP polynomials is another classical construction closely related to orthogonal polynomials, but essentially more complicated. An asymptotic theory for such polynomials is not available yet, even though there is a number of separate results. 

As a single illustration of the sophisticated beauty and complexity of this situation, we plot in Figure~\ref{Fig1HermitePade} the zeros of $Q_{250,2}$ for the analytic germs at infinity of two functions,
\begin{equation}
\label{fHermitePadeexample}
f_1(z)=\frac{z^{2/3}}{(z+1)^{1/3}(z-0.5)^{1/3}} \quad \text{and} \quad f_2(z)=\frac{z^{4/3}}{(z+1)^{2/3}(z^2-0.25)^{1/3}}, 
\end{equation}
both normalized by $f_1(\infty)= f_2(\infty)=1$.

As in \eqref{fPadeexample}, functions $f_1$ and $f_2$ are semiclassical, and it is known \cite{MR3088082} that polynomials $Q_{n,j}$ satisfy a differential equation. Observe the interesting and non-trivial configuration of the curves where their zeros lie.

At any rate, let us insist in a general conclusion we can extract from all previous examples: zeros of analyzed polynomials tend to distribute along certain curves on $\C$ in a very regular way that clearly needs an explanation.

\section{Cauchy transforms and the logarithmic potential theory} \label{sec:logpotential}

How can we count the zeros of a polynomial? A trivial observation is that if
$$
p(z)=(z-a_1)\dots (z-a_n)
$$
is a monic polynomial of degree $n$, then its logarithmic derivative $p'/p$ can be written as
\begin{equation}\label{logderiv}
\frac{p'}{p}(z)=\sum_{k=1}^n \frac{1}{z-a_k}=n \int \frac{1}{z-t} d\chi(p)(t),
\end{equation}
where 
$$
\chi(p):=\frac{1}{n}\sum_{k=1}^n \delta_{a_k}
$$
is the \emph{normalized zero-counting measure} for $p$. Observe that we count each zero in accordance with its multiplicity.

The integral in the right-hand side of \eqref{logderiv} is related to the so-called \textit{Cauchy} or \textit{Stieltjes transform} of this measure. In general, given a finite Borel measure $\mu$ on $\C$, its Cauchy transform (in the sense of  the principal value) is
\begin{equation}\label{principalvalue}
C^\mu(z):= \lim_{\epsilon \to 0+} \int_{|z-x|> \epsilon }  \frac{1}{ x-z }\, d\mu(x).
\end{equation} 
Hence, our first identity is
\begin{equation}\label{identity1}
\frac{p'}{np}(z)=-C^{\chi(p)}(z),
\end{equation} 
which is valid as long as $z\notin \{a_1, \dots, a_n\}$, or equivalently, for $z\in \C\setminus\supp(\chi(p))$.

The second, apparently trivial observation is that
$$
-\log|p(z)|=\sum_{k=1}^n \log \frac{1}{|z-a_k|}=\int \log \frac{1}{|z-t|}d\chi(p)(t).
$$
For a finite Borel measure $\mu$ on $\C$ we define its logarithmic potential  
$$
U^\mu(z)  :=  \int \log\frac{1}{|z-t|}\, d\mu(t)\,.
$$
Hence,
\begin{equation}\label{identity2}
|p(z)|^{1/n}=\exp\left(- U^{\chi(p)}(z) \right),
\end{equation} 
valid again for $z\in \C\setminus\supp(\chi(p))$.

The apparently innocent identities \eqref{identity1}--\eqref{identity2} establish the connection between the analytic properties of polynomials and two very important areas of analysis: the harmonic analysis and singular integrals, and the logarithmic potential theory. 

Let us start with a classical example: the zero distribution of the Jacobi polynomials, defined in \eqref{jacobiDef}. 
An already mentioned fact is that the Jacobi polynomial $y=P_n^{(\alpha,\beta)}$ is solution of a second-order differential equation (hypergeometric equation)
\begin{equation}\label{difJac}
(1-z^2) y''(z) +(\beta-\alpha-(\alpha+\beta+2) z) y'(z)+\lambda y(z)=0\,,
\end{equation}
where $\lambda = n (n+\alpha+\beta+1)$, see \cite[Theorem 4.2.2]{szego:1975}. For $\alpha, \beta>-1$ all zeros of $P_n^{(\alpha,\beta)}$ are real, simple, and belong to $(-1,1)$. With $\alpha, \beta>-1$ and $n\to \infty$, consider the normalized zero-counting measures $\chi_n:=\chi(P_n^{(\alpha,\beta)})$. Standard arguments using weak compactness of the sequence $\chi_n$ show that there exist $\Lambda \subset \N$ and a unit measure $\lambda$ on $[-1,1]$ such that 
\begin{equation}\label{weak*}
\chi_n \stackrel{*}{\longrightarrow} 	\lambda \quad \text{for} \quad n \in \Lambda\,.
\end{equation}  
Here we denote by $\stackrel{*}{\longrightarrow}$ the weak-* convergence of measures. 

An expression for the Cauchy transform $C^{\lambda}$ of $\lambda$ can
be obtained in an elementary way directly from (\ref{difJac}). The derivation of the
continued fraction for $h$ from the differential equation appears in the Perron's monograph \cite[\S 80]{Perron:50},
although the original ideas are contained already in the work of
Euler. For more recent applications, see  \cite{Saff80,Faldey:95,MR1805976}.

The differential equation \eqref{difJac} can be rewritten in terms of the function
$$
h_n(z)=-C^{\chi_n}(z)=\frac{p_n'}{np_n}(z) ,
$$
well defined at least for $z\in \C \setminus [-1,1]$. We get
\begin{equation}\label{difTemp}
(1-z^2) \left(
h_n^2- \frac{h_n'}{n} \right)+
\frac{(\alpha+\beta+2)z+\alpha-\beta}{n}\,
h_n+\frac{\alpha+\beta+n+1}{n}=0\,.
\end{equation}
Since zeros of $p_n$ and $p_n'$ interlace on $(-1,1)$, functions $h_n$ are analytic and uniformly bounded in
$\C \setminus [-1,1]$, and by our assumption,
$$
\lim_{n\in \Lambda} h_n(z) = h(z)=-C^{\lambda}(z)
$$
uniformly on compact subsets of (a.k.a.~locally uniformly in) $\C \setminus [-1,1]$. 
Thus, taking limits in
(\ref{difTemp}), we obtain that $h$ is an algebraic function
satisfying a very simple equation:
\[
(1-z^2) h^2(z)+1=0\,.
\]
In other words,
\begin{equation}\label{hExpl}
C^{\lambda}(z) = -\frac{1}{\sqrt{z^2-1}}\,, \quad \C \setminus [-1,1],
\end{equation}
where we take the branch of the square root satisfying $\sqrt{z^2-1}>0$ for $z>1$.
In possession of the additional information that $\supp \lambda \subset
[-1,1]$ we can use the Stieltjes--Perron (or Sokhotsky--Plemelj) inversion formula to recover the measure
$\lambda$: we conclude that $\lambda$ is absolutely continuous on $[-1,1]$, $\supp (\lambda) =
[-1,1]$, and
\begin{equation}\label{equilMeasure}
\lambda'(x)=\frac{1}{\pi \sqrt{1-x^2}}, \quad x\in [-1,1].
\end{equation}
Due to the uniqueness of this expression, we conclude that the limit in \eqref{weak*} holds for the whole sequence $\Lambda=\N$.

Limit \eqref{weak*} with $\lambda$ given by \eqref{equilMeasure} holds actually for families of orthogonal polynomials with respect to a wide class of measures $\mu$ on $[-1,1]$. The explanation lies in the properties of the measure $\lambda$: it is the \emph{equilibrium measure} of the interval $[-1,1]$. In order to define it properly, we need to introduce the concept of the \textit{logarithmic energy} of a Borel measure $\mu $:
\begin{equation}\label{defEnergyContinuous}
E(\mu) :=  \lim_{\varepsilon \to 0}\iint_{|x-y|>\varepsilon} \log \frac{1}{|x-y|}\, d\mu(x) d\mu(y)\,.
\end{equation}
Moreover, given a real-valued function $\varphi\in L^1(|\mu|)$ (the \emph{external field}), we consider also the \textit{weighted energy}
\begin{equation}\label{defWeightedEnergyCont}
E_\varphi(\mu) := E (\mu)+ 2 \int \varphi\, d\mu  \,.
\end{equation}
The electrostatic model of Stieltjes for the zeros of the Jacobi polynomials $P_n^{(\alpha, \beta)}$ says precisely that the normalized zero-counting measure $\chi_n=\chi(P_n^{(\alpha, \beta)})$ minimizes $E_{\varphi_n}(\mu)$, with 
\begin{equation}\label{externalFieldJacobi}
\varphi_n(z)=\frac{\alpha-1}{2n} \log\frac{1}{|z-1|}+\frac{\beta-1}{2n} \log\frac{1}{|z+1|},
\end{equation}
among all discrete measures of the form $\frac{1}{n}\sum_{k=1}^n \delta_{x_k}$ supported on $[-1,1]$. Notice that $\varphi_n(z)$ vanishes asymptotically as $n\to \infty$ for $z\notin \{-1,1\}$, so that it is not surprising that the weak-* limit $\lambda$ of $\chi_n$, given by \eqref{equilMeasure} on $[-1,1]$, minimizes the logarithmic energy among all probability Borel measures living on that interval:
$$
E(\lambda)=\log 2 =\min \left\{ E(\mu):\, \mu \text{ positive, supported on $[-1,1]$, and } \int d\mu=1 \right\}.
$$ 
In the terminology of potential theory, $\mu$ is the \emph{equilibrium measure} of the interval $[-1,1]$, the value $E(\lambda)=\log 2$ is its \emph{Robin constant}, and 
$$
\cp([-1,1])=\exp(-E(\lambda))=\frac{1}{2}
$$
is its \emph{logarithmic capacity}.

As it was mentioned, this asymptotic zero behavior is in a sense universal: it corresponds not only to $P_n^{(\alpha, \beta)}$, but to any sequence of orthogonal polynomials on $[-1,1]$ with respect to a measure $\mu'>0$ a.e. (see,  e.g., \cite{MR892168,MR882423,MR93d:42029}), or even with respect to complex measures with argument of bounded variation and polynomial decay at each point of its support \cite{MR2172271}. We no longer have a differential equation, but we do have the $L^2(\mu)$ minimal property of the orthogonal polynomial, which at the end suffices.

Notice that the arguments above extend easily to the case of parameters $\alpha, \beta$ dependent on $n$. For instance, when the limits
\begin{equation*}
\lim_n \frac{\alpha_n}{n}=\mathfrak a\geq 0,  \quad \lim_n \frac{\beta_n}{n}=\mathfrak b \geq 0
\end{equation*}
exist, the *-limit of the zero-counting measure of $P_n^{(\alpha_n, \beta_n)}$  minimizes the weighted energy with a non-trivial external field
$$
\varphi(z)=\frac{\mathfrak a}{2} \log\frac{1}{|z-1|}+\frac{\mathfrak b}{2} \log\frac{1}{|z+1|}
$$
(see \cite{MR2124460,MR1805976,MR2142296,AMF:2015} for a general treatment of this case). In this situation the sequence $P_n^{(\alpha_n, \beta_n)}$ satisfies \emph{varying orthogonality} conditions, when the weight in \eqref{orthogonalityJacobi} depends on the degree of the polynomial. As it was shown by Gonchar and Rakhmanov \cite{Gonchar:84}, under mild assumptions same conclusions will be valid for general sequences of polynomials satisfying varying orthogonality conditions.

But let us go back to the zeros of an individual Jacobi polynomial $P_n^{(\alpha, \beta)}$. Under the assumption of $\alpha, \beta>-1$, why do they belong to the real line? A standard explanation invokes orthogonality \eqref{orthogonalityJacobi}, but why are we integrating along the real line? The integrand in  \eqref{orthogonalityJacobi} is analytic, so any deformation of the integration path joining $-1$ and $1$ leaves the integral unchanged. Why do the zeros still go to $\R$?

We can modify Stieltjes' electrostatic model to make it ``$\R$-free'' (see \cite{MR2345246}): on a continuum $\Gamma$ joining $-1$ and $1$ find a discrete measures of the form $\frac{1}{n}\sum_{k=1}^n \delta_{x_k}$, supported on $\Gamma$, minimizing $E_{\varphi_n}(\mu)$, with $\varphi_n$ as in \eqref{externalFieldJacobi}; the minimizer is not necessarily unique. Denote this minimal value by $\mc{E}_n(\Gamma)$. Now maximize $\mc{E}_n(\Gamma)$ among all possible continua $\Gamma$ joining $-1$ and $1$. The resulting value is $\log 2$, and the max--min configuration is on $(-1,1)$, given by the zeros of 
$P_n^{(\alpha, \beta)}$.

This max--min ansatz remains valid in the limit $n\to \infty$: among all measures supported on a continuum joining $-1$ and $1$, $\lambda$ in \eqref{equilMeasure} maximizes the minimal energy. Or equivalently, $[-1,1]$ \textit{is the set of minimal logarithmic capacity} among all  continua joining $-1$ and $1$.

If we recall that the Jacobi polynomials are denominators of the diagonal Pad\'e approximants (at infinity) to the function \eqref{fForJac}, using Markov's theorem (see \cite{MR1130396}) we can formulate our conclusion as follows: the diagonal Pad\'e approximants to this $f$ converge (locally uniformly) in $\C\setminus \Gamma$, where $\Gamma$ is the continuum of minimal capacity joining $-1$ and $1$. It is easy to see that $f$ is a multivalued analytic function with branch points at $\pm 1$.

It turns out that this fact is much more general, and is one of the outcomes of the Gonchar--Rakhmanov--Stahl (or GRS) theory.

\section{The GRS theory and critical measures} \label{sec:GRS}

\subsection{$S$-curves}

Let us denote by $H(\Omega)$ the class of functions holomorphic (i.e., analytic and single-valued) in a domain $\Omega$, 
\begin{equation}
\label{eq2.3}
\mc{T}_f:=\{\Gamma\subset\mbb{C}:f\in H(\mbb{C}\setminus \Gamma)\},
\end{equation}
and let $S\in\mc{T}_f$ be defined by the minimal capacity property
\begin{equation}
\label{eq2.4}
\cp(S)=\min_{\Gamma\in\mc{T}_f}\cp(\Gamma).
\end{equation}
Stahl \cite{MR88d:30004a,Stahl:86} proved that the sequence $\left\{\pi_n\right\}$ converges to $f$ in capacity in the complement to $S$,
$$
\pi_n\overset{\cp}{\longrightarrow}f,\quad z\in\mbb{C}\setminus  S,
$$
under the assumption that the set of singularities of $f$ has capacity $0$. The convergence in capacity (instead of uniform convergence) is the strongest possible assertion for an arbitrary function $f$ due to the presence of the so-called spurious poles of the Pad\'e approximants that can be everywhere dense, even for an entire $f$, see \cite{MR87g:41037,MR99a:41017,MR1662718}, as well as Figure \ref{Fig1Pade}.

More precisely, Stahl established the existence of a unique $S\in \mc{T}_f$ of minimal capacity, comprised of a union of analytic arcs, such that the jump $f_+-f_-$ across each arc is $\not\equiv 0$, as well as the fact  that for the denominator $Q_n$ of Pad\'e approximants $\pi_n$ we have $\chi (Q_n)\overset{*}{\ \to\ }\lambda_S$, where $\lambda_S $ is equilibrium measure for $S$. Here and in what follows we denote by $f_+$ (resp., $f_-$) the left (resp., right) boundary values of a function $f$ on an oriented curve.

The original work of Stahl contained not only the proof of existence, but a very useful characterization of the extremal set $S$: on each arc of this set
\begin{equation}\label{Sproperty}
	\frac{\partial U^\nu(z)}{\partial n_+}=\frac{\partial U^\nu(z)}{\partial n_-},
\end{equation}
where $n_\pm $ are the normal vectors to $S$ pointing in the opposite directions. This relation is known as the \textit{$S$-property} of the compact $S$.

Notice that Stahl's assertion is not conditional, and the existence of such a compact set of minimal capacity is guaranteed. In the case of a finite set of singularities, the simplest instance of such a statement is the content of the so-called \textit{Chebotarev's problem} from the geometric function theory about existence and characterization of a continuum of minimal capacity containing a given finite set. It was solved independently by Gr\"otzsch and Lavrentiev in the 1930s.
A particular case of Stahl's results, related to  Chebotarev's problem, states that 
given a finite set $\mathcal{A}=\left\{a_1,\dotsc,a_p\right\}$ of $p\ge2$ distinct points in $\mbb{C}$ there exist a unique set $S$
	$$
	\cp(S)\ =\ \min_{\Gamma\in\mc{T}}\cp(\Gamma),
	$$
	where $\mc{T}$ is the class of continua $\Gamma\subset\mbb{C}$ with $\mathcal {A}\subset\Gamma$. The complex Green function $G(z)=G(z,\infty)$ for $\ \overline{\mbb{C}}\setminus  S$ has the form
	$$
	G(z)=\int^z_a\sqrt{{V(t)}/{A(t)}}\,dt,
	\quad V(z)=z^{p-2}+\dotsb\in\mbb{P}_{p-2}.
	$$
	where $ A(z) = (z-a_1) \dots(z-a_p\ )$ and $V$ is a polynomial uniquely defined by $\mathcal A$. In particular,  we have
	$$
	S=\{z:\Re G(z)=0\},
	$$
and \eqref{Sproperty} is an immediate consequence of these expressions. Another consequence is that $S$ is a union of arcs of critical trajectories of the quadratic differential $-(V/A)(dz)^2$. This is also the zero level of the (real) Green function $g(z)=\Re G(z)$ of the two-sheeted Riemann surface for $\sqrt{V/A}$.

In order to study the limit zero distribution of Pad\'e denominators $Q_n$ Stahl \cite{Stahl:86} created an original potential theoretic method based directly on the non-Hermitian orthogonality relations
$$
\oint_\Gamma Q_n(z)\,z^k\,f(z)\,dz=0,\quad
k=0,1,\dotsc,n-1, \quad\Gamma\in\mc{T}_f,
$$
satisfied by these polynomials; incidentally, he also showed for the first time how to deal with a non-Hermitian orthogonality. The method was further developed by Gonchar and Rakhmanov in \cite{Gonchar:87} for the case of varying orthogonality (see also \cite{MR2854178}). The underlying potential theoretic model in this case must be modified by including a non--trivial external field $\varphi$.
If the set $S$ on the plane is comprised of a finite number of piece-wise analytic arcs, we say that it exhibits the \textit{$S$-property in the external field $\varphi$} if 
\begin{equation}\label{SpropertyExtF}
\frac{\partial (U^\lambda+\varphi)(z)}{\partial n_+}=\frac{\partial (U^\lambda+\varphi)(z)}{\partial n_-},\quad z\in \supp (\lambda)\setminus e,
\end{equation}
where $\lambda=\lambda_{S,\varphi}$ is now the minimizer of the weighted energy \eqref{defWeightedEnergyCont} among all probability measures supported on $S$, and $\cp(e)=0$. In other words, $\lambda$ is the equilibrium measure of $S$ in the external field $\varphi$, and can be characterized by the following variational (equilibrium) conditions: there exists a constant $\omega$ (the \textit{equilibrium constant}) such that
\begin{equation} \label{Equ-2}
\left(U^\lambda+\varphi\right)(z) \begin{cases}
=\omega, & z\in\supp(\lambda), \\
\ge \omega, & z\in S.
\end{cases}
\end{equation}
Equation \eqref{Equ-2} uniquely defines both the probability measure $\lambda $ on $S$ and the constant $\omega=\omega_{\varphi,S}$.

The pair of conditions \eqref{SpropertyExtF}--\eqref{Equ-2} has a standard electrostatic interpretation, which turns useful for understanding the structure of the $S$-configurations. Indeed, it follows from \eqref{Equ-2} that distribution of a positive charge presented by $\lambda$ is in  equilibrium on the fixed conductor S, while the $S$-property of compact $S$  in \eqref{SpropertyExtF} means that forces acting on the element of charge at $z\in \supp(\lambda)$ from both sides of $S$ are equal. So, the  distribution  $\lambda$ of an $S$-curve will remain in equilibrium if we remove the condition (``scaffolding'') that the charge belongs to $S$ and make the whole plane a conductor (except for a few exceptional insulating points, such as the endpoints of some of arcs in the support of $\lambda$). In other words, $\lambda$ is a distribution of charges which is in an (unstable) equilibrium in a conducting domain. 

Let $\Omega$ be a domain in $\C$, $\Gamma$ a compact subset of $\Omega$ of positive capacity, $f\in H(\Omega\setminus \Gamma)$, and let the sequence $\Phi_n(z)\in H(\Omega)$. Assume that polynomials $Q_n(z)=z^n+\dotsb$ are defined by the non-Hermitian orthogonality relations 
\begin{equation}
\label{eq1.7}
\oint_S Q_n(z)z^k w_n(z)dz=0,\quad k=0,1,\dots,n-1,
\end{equation}
where 
\begin{equation}\label{singleExponent}
w_n(z):=e^{-2n\Phi_n(z)} f(z),
\end{equation}
the integration goes along the boundary of $\overline \C \setminus \Gamma$ (if such integral exists, otherwise integration goes over an equivalent cycle in $\overline \C \setminus \Gamma$). 

A slightly simplified version of one of the main results of Gonchar and Rakhmanov \cite{Gonchar:87} is the following: 
\begin{thm}
	\label{GRSthm}
Assume that $\Phi_n$ converge locally uniformly in $\Omega$ (as $n\to \infty$) to a function $\Phi(z)$. If $\Gamma$ has the $S$-property in $\varphi=\Re\Phi(z)$ and if the complement to the support of the equilibrium measure $\lambda=\lambda_{S,\varphi}$ is connected, then $\chi(Q_n)\overset{*}{ \longrightarrow}\lambda
	$.
\end{thm}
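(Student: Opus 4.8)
The plan is to combine a weak-$*$ compactness argument with two-sided $n$-th root estimates for $Q_n$ and for the associated function of the second kind, and then to identify the limiting measure as $\lambda=\lambda_{S,\varphi}$ through the uniqueness of the equilibrium conditions \eqref{Equ-2}. First I would verify, by a standard argument, that all zeros of $Q_n$ lie in a fixed compact set, so that $\{\chi(Q_n)\}$ is a weak-$*$ precompact family of probability measures. It then suffices to show that every partial limit $\chi(Q_n)\overset{*}{\longrightarrow}\nu$, $n\in\Lambda\subset\N$, coincides with $\lambda$, since then the whole sequence converges. Concretely, I would aim to prove that such a $\nu$ is supported on $S$ and satisfies $U^\nu+\varphi=\omega$ on $\supp\nu$ while $U^\nu+\varphi\ge\omega$ on $S$ for a suitable constant $\omega$; by \eqref{Equ-2} this forces $\nu=\lambda$.

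The technical engine would be the function of the second kind (the ``remainder'') $R_n(z):=\oint_S \dfrac{Q_n(t)\,w_n(t)}{t-z}\,dt$, holomorphic in $\overline{\C}\setminus S$, for which the orthogonality \eqref{eq1.7} is equivalent to $R_n(z)=\mathcal{O}(z^{-n-1})$ at infinity. Since $\dfrac{Q_n(z)-Q_n(t)}{z-t}\in\P_{n-1}$, the same orthogonality yields the algebraic identity
\[
Q_n(z)R_n(z)=\oint_S\frac{Q_n^2(t)\,w_n(t)}{t-z}\,dt=:F_n(z),
\]
so $F_n$ is holomorphic off $S$, vanishes at infinity, and jumps across $S$ proportionally to $Q_n^2\,w_n$; moreover $z^{n+1}R_n(z)$ tends to a multiple of $I_n:=\oint_S Q_n^2 w_n\,dt$. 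Using $\frac1{2n}\log|w_n(t)|=-\Re\Phi_n(t)+\frac1{2n}\log|f(t)|\to-\varphi(t)$ uniformly on $S$ and, by the principle of descent, $\limsup_{n\in\Lambda}\frac1n\log|Q_n(t)|\le -U^\nu(t)$ on $S$, the integral representation of $F_n$ gives the easy upper estimate
\[
\limsup_{n\in\Lambda}\frac1{2n}\log|F_n(z)|\le -m,\qquad m:=\min_{t\in S}\bigl(U^\nu(t)+\varphi(t)\bigr),
\]
locally uniformly in $\overline{\C}\setminus S$ and, by the maximum principle, up to $S$. The hypothesis that $\overline{\C}\setminus\supp\lambda$ is connected is what makes the $n$-th root asymptotics $\frac1n\log|Q_n(z)|\to -U^\nu(z)$ hold locally uniformly in $\overline{\C}\setminus\supp\nu$, with no exceptional compact set where convergence fails.

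The heart of the matter --- and the step I expect to be the main obstacle --- is the matching lower bound, and this is precisely where the $S$-property \eqref{SpropertyExtF} of $S$ in the field $\varphi$ enters. Since $R_n\not\equiv0$ and its order of vanishing at infinity is governed by $I_n$, one needs first the non-degeneracy $I_n\neq0$ for infinitely many $n$ (itself requiring an argument), and then a lower bound on the growth of $R_n$. Recovering $Q_n^2w_n$ from the jump of $F_n$ across $S$ and feeding back the upper estimate forces $\frac1{2n}\log|Q_n^2w_n|$, i.e.\ $-(U^\nu+\varphi)$, to be essentially constant $\equiv -\omega$ on the arcs carrying the limiting mass; the $S$-property is exactly what makes the competing normal derivatives from the two sides of $S$ cancel, so that this value is consistent and that $U^\nu+\varphi\ge\omega$ propagates to all of $S$, not only to $\supp\nu$. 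Equivalently, one may compare $U^\nu$ with $U^\lambda$: their difference is bounded, harmonic off $\supp\nu\cup\supp\lambda$, and vanishes at infinity, and \eqref{SpropertyExtF} allows the maximum principle to be carried across the arcs of $S$, yielding $U^\nu\equiv U^\lambda$. Either way, $\nu$ satisfies the characterizing conditions \eqref{Equ-2}, hence $\nu=\lambda_{S,\varphi}$, and since every partial limit is the same, $\chi(Q_n)\overset{*}{\longrightarrow}\lambda$.

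In summary, the soft part is the upper estimate, a direct consequence of Cauchy's integral formula together with the convergence $\Phi_n\to\Phi$; the delicate parts are (i) the a priori localization of the zeros of $Q_n$, (ii) the non-vanishing of $I_n$ along a subsequence, and above all (iii) converting ``$R_n$ is not small'' into a pointwise lower bound that, combined with the $S$-property, closes the inequality on the whole of $S$ --- after which the uniqueness of the weighted equilibrium measure finishes the proof.
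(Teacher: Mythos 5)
A preliminary remark: the paper does not prove Theorem~\ref{GRSthm} at all — it states it as a simplified version of the main result of \cite{Gonchar:87} and refers there for the proof — so the comparison is with the Gonchar--Rakhmanov potential-theoretic method that the paper describes. Your plan does reproduce the broad outline of that method (second-kind function $R_n$, the identity $Q_nR_n=F_n$, two-sided $n$-th root estimates, identification of every weak-$*$ limit through the variational conditions \eqref{Equ-2}), so the scaffolding is the right one.

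As a proof, however, it has a genuine gap precisely at the step you yourself call the heart of the matter. The decisive lower estimate — turning the orthogonality \eqref{eq1.7} together with the $S$-property \eqref{SpropertyExtF} into $U^\nu+\varphi\ge\omega$ on all of $S$ with equality on $\supp\nu$ — is asserted rather than argued: the sentence about ``recovering $Q_n^2w_n$ from the jump of $F_n$ and feeding back the upper estimate'' is not a derivation, and the alternative comparison of $U^\nu$ with $U^\lambda$ does not follow from what precedes it. A difference of potentials that is bounded, harmonic off $\supp\nu\cup\supp\lambda$ and vanishing at infinity need not vanish identically (its extrema sit on the supports, where nothing has yet been established); to ``carry the maximum principle across the arcs'' one needs the harmonic continuation across $\supp\lambda$ that the $S$-property provides for the \emph{equilibrium} potential — essentially the representation \eqref{eq4.5} — used inside a one-sided extremal comparison with the remainder. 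That construction is the technical core of the Gonchar--Rakhmanov proof and is absent from your sketch. In addition, two auxiliary claims are unjustified as stated: the zeros of $Q_n$ need not lie in a fixed compact set in the non-Hermitian setting (one must allow mass to escape, work with limits of unit or sub-unit measures on $\overline{\C}$, and recover full mass a posteriori from the lower estimate), and the locally uniform convergence of $\frac1n\log|Q_n|$ to $-U^\nu$ off $\supp\nu$ is not a consequence of the connectedness of $\C\setminus\supp\lambda$: the principle of descent gives only the upper half, the matching inequality holds a priori only quasi-everywhere (or in capacity), and this is exactly where spurious zeros intrude. Connectedness of the complement is used in the actual proof to run the maximum-principle comparison in a single domain reaching infinity, not to remove exceptional sets. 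So the framework is correct, but step (iii), and with it the theorem, still rests on machinery you have not supplied.
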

Theorem~\ref{GRSthm} was proved in \cite{Gonchar:87}, where it was called ``generalized Stahl's theorem''. 
Observe that unlike the original Stahl's theorem, its statement is conditional: \emph{if} we are able to find a compact set with the $S$-property (in a harmonic external field) and connected complement, \emph{then} the weak-* convergence is assured.  Under general assumptions on the class of integration paths $\mc F$ and in the presence of a non--trivial external field, neither existence nor uniqueness of a set with the $S$-property are guaranteed. 

A general method of solving the existence problem is based on the maximization of the equilibrium energy, which is inspired by the minimum capacity characterization (in the absence of an external field), see \eqref{eq2.4}. More exactly, consider the problem of finding $S \in \mc F$ with the property
\begin{equation}
\label{Equ-3}
\mc{E}_\varphi (S)=\max_{F\in\mc{F}} \mc{E}_\varphi(F),
\qquad\text{where}\qquad
\mc{E}_\varphi (F) =E_\varphi(\lambda_{F,\varphi})=
\min_{\mu\in\mc{M}_1(F)}E_\varphi(\mu),
\end{equation}
where $\mc{M}_1(F)$ is the set of all probability Borel measures on $F$. 
If a solution $S$ of this extremal problem exists then under ``normal circumstances'' $S$ is an $S$-curve in the external field $\varphi$, see \cite{Rakhmanov:2012fk}.

\subsection{Critical measures}

\textit{Critical measures} are Borel measures on the plane with a vanishing local variation of their energy in the class of all smooth local variations with a prescribed set of fixed points. They are, therefore equilibrium distributions in a conducting plane with a number of insulating points of the external field. 

The zeros of the Heine--Stieltjes polynomials (see Section \ref{sec:HSpolyns}) are (discrete) critical measures. This observation lead in \cite{MR2770010} to a theorem on asymptotics of these polynomials in terms of continuous critical measures with a finite number of insulating points of the external field. 

It turns out that the equilibrium measures of compact sets with the $S$-property are critical; the reciprocal statement is that the critical measures may be interpreted as the equilibrium measures of $S$-curves in piece-wise constant external fields. Both notions, however, are defined in a somewhat different geometric settings, and it is in many ways convenient to distinguish and use both concepts in the study of many particular situations.

The idea of studying critical (stationary) measures has its origins in \cite{Gonchar:87}. Later it was used in \cite{Kamvissis2005,Rakhmanov94} in combination with the min--max ansatz, and systematically in \cite{MR2770010}, see also \cite{MR2647571,KuijlaarsSilva2015}. The formal definition is as follows: let   $\Omega\subset\mbb{C}$ be a domain,  $\mc{A}\subset\Omega$ be a finite set and  $\varphi$ a harmonic function in $\Omega\setminus \mc{A}$. Let $t\in \C$ and $h\in C^2(\Omega)$, satisfying $h\big|_\mc{A}\equiv 0$. Then for any (signed) Borel measure $\mu$ the mapping (``$\mc{A}$-variation'') $z\to z^t=z+th(z)$ defines the pull-back measure $\mu^t $, as well as the associated variation of the weighted energy,
\begin{equation}
\label{eq4.1}
D_hE_\varphi(\mu)=\lim_{t\to 0+}\frac{1}{t}(E_\varphi(\mu^t)-E_\varphi(\mu)),
\end{equation}
where $E_\varphi$ was introduced in \eqref{defWeightedEnergyCont}.
We say that $\mu$ is \textit{$(\mc{A},\varphi)$-critical} if for any $\mc{A}$-variation, such that the limit above exists, we have
\begin{equation}
\label{eq4.2}
D_hE_\varphi(\mu)=0;
\end{equation}
when $\varphi\equiv 0$, we write $\mc{A}$-critical instead of $(\mc{A},0)$-critical measure.

The relation between the critical measures and the $S$-property \eqref{SpropertyExtF} is very tight: every equilibrium measures with an $S$-property is critical, while the potential of any critical measure satisfies \eqref{SpropertyExtF}. However, in some occasion it is more convenient to analyze the larger set of critical measures.

As it was proved in \cite{MR2770010},  for any $(\mc{A},\varphi)$-critical measure $\mu$ we have
\begin{equation}\label{eq4.4}
R(z):=\left(\int\frac{d\mu(t)}{t-z}+\Phi'(z)\right)^2\in H(\Omega\setminus \mc{A}).
\end{equation}
This formula implies the following description of $\Gamma:=\supp(\mu)$: it consists of a finite number of critical or closed trajectories of the quadratic differential $-R(z)(dz)^2$ on $\C$ (moreover, all trajectories of this quadratic differential are either closed or critical, see \cite[Theorem 5.1]{MR2770010} or \cite[p.~333]{Gonchar:87}). Together with \eqref{eq4.4} this  yields the representation
\begin{equation}\label{eq4.5}
U^\mu(z)+\varphi(z)=-Re\int^z \sqrt{R(t)}\,dt+\const,\quad z\in\mbb{C}\setminus \Gamma.
\end{equation}
Finally, the $S$-property \eqref{SpropertyExtF} on $\Gamma$  and the formula for the density $d\mu(z)=\frac1{\pi}|\sqrt{R}\,dz|$ on any open arc of $\Gamma$ follow directly from \eqref{eq4.5}. In this way, function $R$ becomes the main parameter in the construction of a critical measure: if we know it (or can guess it correctly), then consider the problem solved.

\begin{example} \label{exampleSaffConf}
Let us apply the GRS theory to the following simple example: we want to study zeros asymptotics of the polynomials $Q_n\in \P_n$ defined by the orthogonality relation
\begin{equation}
\label{OrtExample}
\int_F Q_n(z)z^k w_n(z)dz=0,\quad k=0,1,\dots,n-1,
\end{equation}
where the varying (depending on $n$) weight function has the form 
\begin{equation}
\label{wExample}
w_n(z)  =  e^{- k n z} , \quad k\geq 0,
\end{equation}
and the integration goes along a Jordan arc $F$ connecting $a_1=-1- 2i$ and $a_2=\overline{a_1}=-1+2i$. Since   
$$
\varphi(z):=-\lim_{n\to \infty }\frac 1n \log {|w_n(z)|}=k \Re z
$$
uniformly on every compact subset of $\C$, the application of the GRS theory is reduced to finding a Jordan arc $S$, connecting $a_1$ and $a_2$, such that the equilibrium measure $\lambda=\lambda_{S,\varphi}$ has the $S$-property in the external field $\varphi$. 

For $k=0$, such an $S$ is just the vertical straight segment connecting both endpoints. Using \eqref{eq4.4} and the results of \cite{MR2770010} it is easy to show that for small values of $k>0$ (roughly speaking, for $0<k<0.664$), 
$$
R(z):=\left(\int\frac{d\lambda(t)}{t-z}+k\right)^2=\frac{(z-\beta)^2}{A(z)}, \quad A(z):=(z-a_1)(z-a_2), \quad \beta=-1+\frac{1}{k},
$$
valid a.e.~(with respect to the plane Lebesgue measure) on $\C$. In this case, $\supp\lambda$ is connected, and is the critical trajectory of the quadratic differential
\begin{equation}\label{quaddiffexample1}
-\frac{(z-\beta)^2}{A(z)}(dz)^2,
\end{equation}
connecting $a_2=-1\pm2i$. As an illustration of this case we depicted the zeros of $Q_{150}$ for $k=0.6$, see Figure \ref{FigExampleExp}, left\footnote{The procedure used to compute these zeros numerically is briefly explained in Section~\ref{sec:numerical experiments}.}.

\begin{figure}[h]
	\centering 
	\hspace{-5mm} \begin{tabular}{cc}
		\begin{overpic}[scale=0.65]{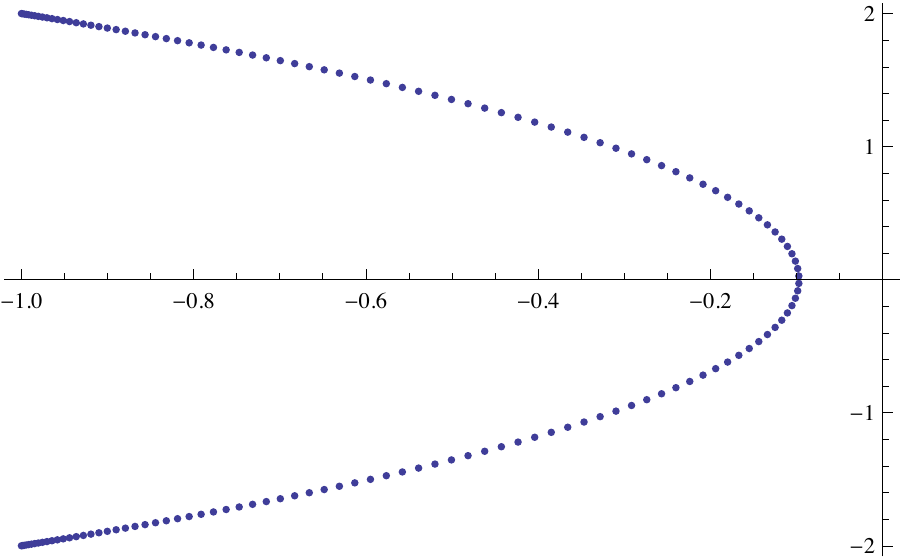}
		\end{overpic} & 
		\begin{overpic}[scale=0.65]{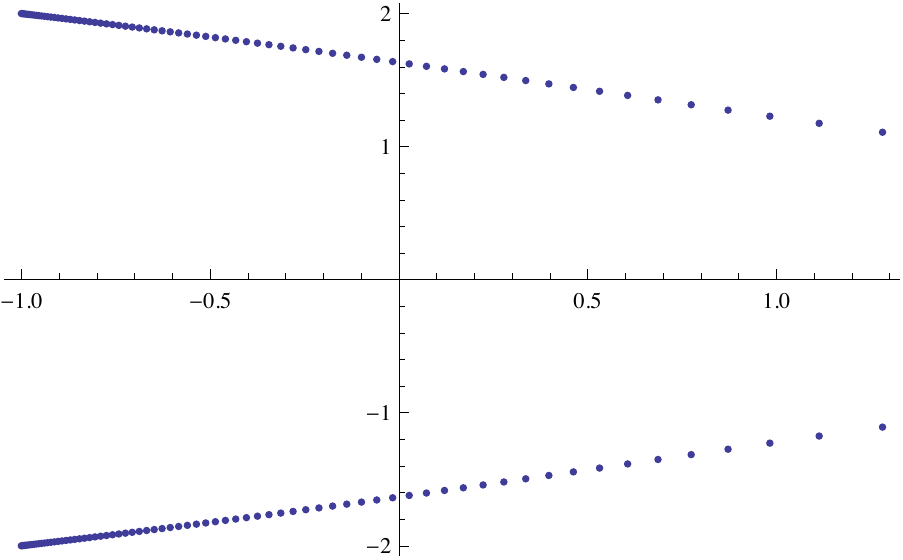}
	\end{overpic}
	\end{tabular}
	\caption{Zeros of $Q_{150}$, defined by \eqref{OrtExample}, for $k=0.4$ (left) and $k=0.8$ (right).}
	\label{FigExampleExp}
\end{figure}

At a critical value of $k=k^*\approx 0.664$ the double zero $\beta=-1+1/k$ hits the critical trajectory and splits (for $k>k^*$) into two simple zeros, that can be computed, so that now 
$$
R(z):=\left(\int\frac{d\lambda(t)}{t-z}+k\right)^2=\frac{(z-\alpha)(z-\overline{\alpha})}{A(z)}, \quad A(z):=(z-a_1)(z-a_2).
$$
In this situation $\supp(\lambda)$ is made of two real-symmetric open arcs, each connecting $a_j$ with $\alpha_j$ lying in the same half-plane, delimited by $\R$; they are the critical trajectory of the quadratic differential
$$
-\frac{(z-\alpha)(z-\overline{\alpha})}{A(z)}(dz)^2
$$
on $\C$. For illustration, see the zeros of $Q_{150}$ for $k=0.8$, see Figure \ref{FigExampleExp}, right.
\end{example}

Formula \eqref{eq4.5} and the fact that $\Gamma$ is comprised of arcs of trajectories of $-R(dz)^2$ on $\C$ show that $U^\lambda+\varphi$ is a constant on $\Gamma$. However, this constant is not necessarily the same on each connected component of $\Gamma$: this additional condition singles out the equilibrium measures with the $S$-property within the class of critical measures. Equivalently, the equilibrium measure can be identified by the validity of the variational condition \eqref{Equ-2}.

Recall that one of the motivations for the study of critical (instead of just equilibrium) measures is the characterization of the zero distributions of Heine--Stieltjes polynomials, see Section~\ref{sec:HSpolyns}. 
One of the main results in \cite{MR2770010} is the following. If we have a convergent sequence of Van Vleck polynomials $V_n(z)\to V(z)=z^{p-2}+\dots$, then for corresponding Heine--Stieltjes polynomials $Q_n$ we have
$$
 \mc{X}(Q_n)\ \overset{*}{\to}\ \mu,
$$
where $\mu$ is an $\mc{A}$-critical measure. Moreover, any $\mc{A}$-critical measure may be obtained this way.  
In electrostatic terms, $ \mc{X}(Q_n)$ is a discrete critical measure and the result simply means that a weak limit of a sequence of discrete critical measures is a (continuous) critical measure. In the case of the Heine--Stieltjes polynomials, $R$ in \eqref{eq4.4} is a rational function of the form $C/A^2$, with $A$ given by \eqref{defAandB}, and the polynomial $C$ determined by a system of nonlinear equations.

In the case of varying Jacobi polynomials, $P_n^{(\alpha_n, \beta_n)}$, with $\alpha_n$, $\beta_n$ satisfying
\begin{equation}\label{variationAB}
\lim_n \frac{\alpha_n}{n}=\mathfrak a\in \C,  \quad \lim_n \frac{\beta_n}{n}=\mathfrak b\in \C,
\end{equation} 
we can obtain the explicit expression for $R$ using the arguments of Section~\ref{sec:logpotential}. By the GRS theory, the problem of the weak-* asymptotics of the zeros of such polynomials boils down to the proof of the existence of a critical trajectory of the corresponding quadratic differential, joining the two zeros of $C$, and of the connectedness of its complement in $\C$, see  \cite{MR2124460,MR1805976,MR2142296,AMF:2015}, as well as Figure~\ref{fig:Withzeros}.

\begin{figure}[htb]
	\centering \begin{overpic}[scale=0.7]{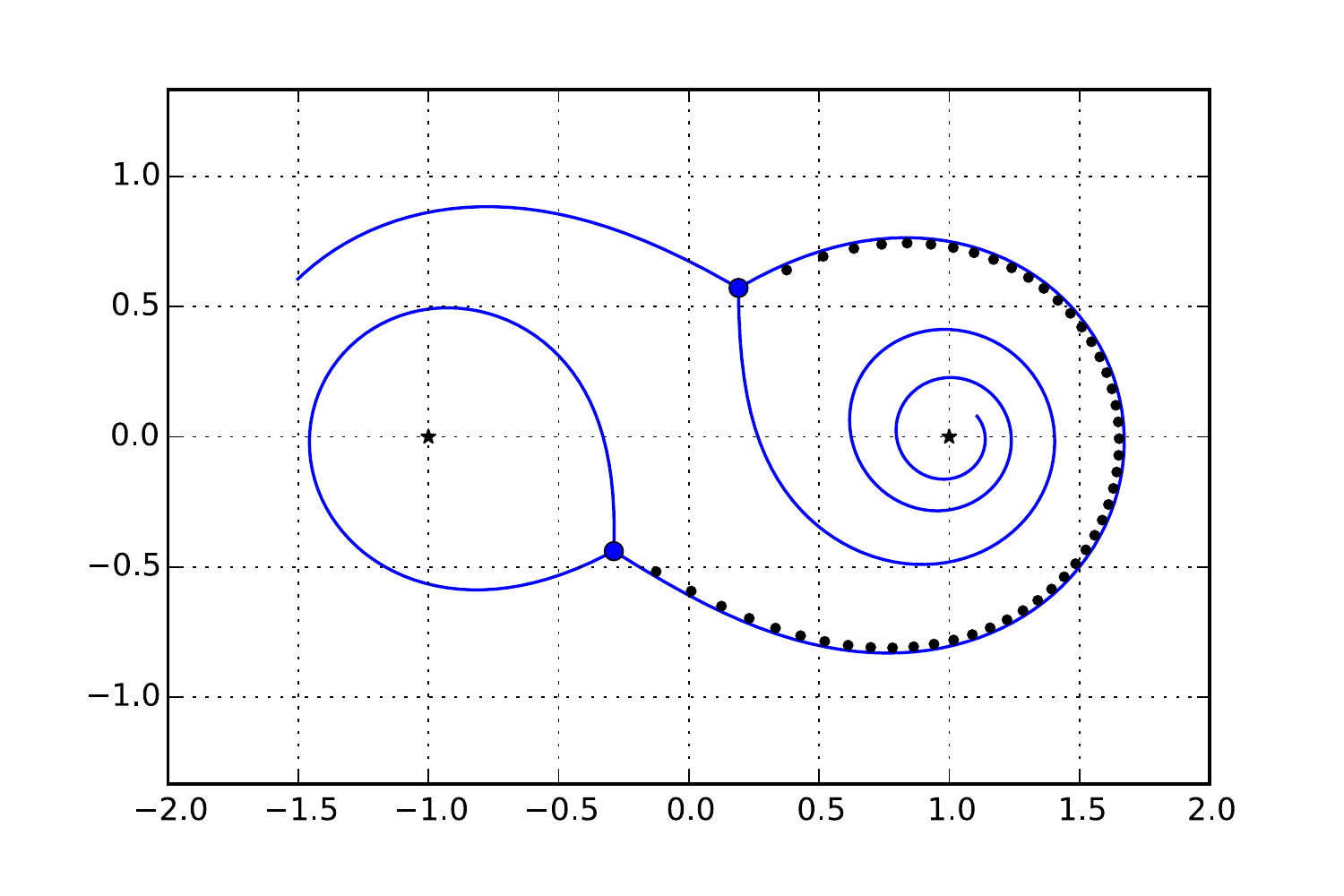}%
	\end{overpic}
	\caption{Critical trajectories  of  $-R(z)(dz)^2$, with $\mathfrak a=  -1.1 + 0.1i$ and $ \mathfrak b= 1$  in \eqref{variationAB}, and the zeros of the polynomial  from Figure~\ref{fig:onlyzeros} superimposed.}
	\label{fig:Withzeros}
\end{figure}

\section{Vector critical measures}\label{subsec6.7}

If we check the motivating examples from Section~\ref{sec:examples}, we will realize that at this point we only lack tools to explain the asymptotics of the zeros of the Hermite--Pad\'e polynomials (Section~\ref{sec:HPapprox}).
For this, we need to extend the notion of critical measures (and in particular, of equilibrium measures on sets with an $S$-property) to a vector case.

Assume we are given a vector of $p\geq 1$ nonnegative measures $\vec\mu=(\mu_1,\dots,\mu_p) $, compactly supported on the plane, a symmetric and positive-semidefinite interaction matrix $T=(\tau_{j,k})\in \R^{p\times p}$, and a vector of real-valued harmonic external fields $\vec\varphi=(\varphi_1,\dots,\varphi_p)$, $\varphi_j=\re \Phi_j$, $j=1,\dots,p$.  We consider the total (vector) energy functional of the form \cite{Gonchar:85}
\begin{equation}\label{energy_functional}
E(\vec \mu)=E_{\vec\varphi}(\vec\mu)=\sum_{j,k=1}^p \tau_{j,k} E(\mu_j,\mu_k)+2\sum_{j=1}^p\int \varphi_j d\mu_j,
\end{equation}
(compare with \eqref{defWeightedEnergyCont}), where
$$
E(\mu,\nu):=\iint\log\frac{1}{|x-y|}d\mu(x)d\nu(y)
$$
is the mutual logarithmic energy of two Borel measures $\mu$ and $\nu$.

Typical cases of the matrix $T$ for $p=2$ are
$$
\frac{1}{2}\,\begin{pmatrix}
2 & 1  \\ 1 & 2
\end{pmatrix} \quad \text{and} \quad \frac{1}{2}\,\begin{pmatrix}
2 & -1  \\ -1 & 2
\end{pmatrix},
$$
corresponding to the so-called
Angelesco and  Nikishin systems, respectively, see \cite{MR651757,MR1240781,MR1702555}.

As in the scalar situation, for $t\in \C$ and $h\in C^2(\C)$, denote 
by $\vec \mu^{\, t}$ the push-forward measure of $\vec \mu=(\mu_1,\dots,\mu_p)$ induced by the variation of the plane $z\mapsto h_t(z)=z+th(z)$, $z\in\C$. We say that  $\vec\mu$ is a \emph{critical vector 
	measure} (or a saddle point of the energy $E(\cdot)$) if  
\begin{equation}\label{variations_critical_measure}
D_hE_\varphi(\vec \mu)=\lim_{t\to 0+}\frac{1}{t}(E_{\vec \varphi}(\vec \mu^{\, t})-E_{\vec \varphi}(\vec \mu))=0,
\end{equation}
	for every function $h\in C^2(\C)$.

Usually, critical vector measures a sought within a class specified by their possible support and by some constraints on the size of each component. The vector \textit{equilibrium} problems deal with the minimizers of the energy functional  \eqref{energy_functional} over such a family of measures $\vec\mu$.

For instance, we can be given $p$ families of analytic curves $\mathcal T_j$,  so that $\supp \mu_j\in \mathcal T_j$, $j=1, \dots, p$, and additionally some constraints on the size of each component $\mu_j$ of $\vec \mu$. In a classical setting this means that we fix the values $m_1, \dots, m_p\geq 0$, such that $m_1+\dots+m_p=1$, and impose the condition
$$
|\mu_j|:=\int d\mu_j = m_j, \quad j=1, \dots, p,
$$
see e.g.~\cite{Gonchar:85,MR651757,MR2188362,MR2963452}. More recently new type of constraints have found applications in this type of problems, when the conditions are imposed on a linear combination of $|\mu_j|$. 

For instance, \cite{AMFSilva15} considers the case of $p=3$, with the interaction matrix
$$
T=\frac{1}{2}\,
\begin{pmatrix}
2 & 1 & 1 \\
1 & 2 & -1 \\
1 & -1 & 2
\end{pmatrix},
$$
polynomial external fields, and conditions
\begin{equation}
\label{massconstraint}
|\mu_1|+|\mu_2|=1,\quad |\mu_1|+|\mu_3|=\alpha,\quad |\mu_2|-|\mu_3|=1-\alpha,
\end{equation}
where  $\alpha\in [0,1]$ is  a parameter; see also  \cite{MR2475084} and  
\cite{MR2796829} for  $\alpha=1/2$ and its applications to Hermite-Padé approximants.
From the electrostatic point of view it means that we no longer fix the  total masses of each component $\mu_j$, but charges (of either sign) can ``flow'' from one component to another.
 
There is a natural generalization of the scalar $S$-property to the vector setting. Continuing with the results of the recent work \cite{AMFSilva15}, it was proved that under some additional natural assumptions on the critical vector measure $\vec \mu$,  if $\Sigma$ is an open analytic arc in $  \supp\mu_j$, for some $j\in \{1, 2, 3\}$, then there exists a constant $\omega=\omega(\Sigma)\in 
\R$ for which both the variational equation (\emph{equilibrium 
	condition})
\begin{equation}\label{variational_equations_critical_measures}
\sum_{k=1}^3a_{j,k}U^{\mu_k}(z)+ \varphi_{j}(z) =\omega,
\end{equation}
and the $S$-property
\begin{equation*}
\frac{\partial}{\partial n_+}\left(\sum_{k=1}^3 \tau_{jk}U^{\mu_k}(z)+ \varphi_{j}(z) \right)=\frac{\partial}{\partial 
	n_-}\left(\sum_{k=1}^3 \tau_{jk}U^{\mu_k}(z)+ \varphi_{j}(z) \right)
\end{equation*}
hold true a.e.~on $ \Sigma$, where again $n_\pm$ are the unit normal vectors to $\Sigma$, pointing in the opposite directions.

Clearly, the problem of existence of compact sets with the $S$-property for the vector problem are even more difficult than in the scalar case. The possibility to reduce this question to the global structure of critical trajectories of some quadratic differentials is the most valuable tool we have nowadays. This is given by higher-dimension analogues of the equation \eqref{eq4.4} for critical vector measures. Again, in the situation of \cite{AMFSilva15} it was proved that functions
 \begin{equation}\label{definition_xi_functions}
 \begin{aligned}
 \xi_1(z) & =\frac{\Phi_1'(z)}{3}+\frac{\Phi_2'(z)}{3}+C^{\mu_1}(z)+C^{\mu_2}(z),\\
 \xi_2(z) & =-\frac{\Phi_1'(z)}{3}-\frac{\Phi_3'(z)}{3}-C^{\mu_1}(z)-C^{\mu_3}(z),\\
 \xi_3(z) & =-\frac{\Phi_2'(z)}{3}+\frac{\Phi_3'(z)}{3}-C^{\mu_2}(z)+C^{\mu_3}(z),
 \end{aligned}
 \end{equation}
 are solutions of the cubic equation 
 \begin{equation}\label{spectral_curve_general}
 \xi^3- R(z)\xi+D(z)=0,
 \end{equation}
 where $R$ is a polynomial  and $D$ a rational function with poles of order at most $2$. This fact yields that  measures $\mu_1,\mu_2,\mu_3$ are supported on a finite union of analytic arcs, that are trajectories of a quadratic differential living on the Riemann surface of \eqref{spectral_curve_general}, and which is explicitly given on each sheet of this Riemann surface as $(\xi_i-\xi_j)^2(z)(dz)^2$.
The fact that the support of critical (or equilibrium) vector measures is described in terms of trajectories on a compact Riemann surface (and thus, what we actually see is their projection on the plane) explains the apparent geometric complexity of the limit set for the zeros of Hermite--Pad\'e polynomials, see Figure~\ref{Fig1HermitePade}.

\section{Orthogonality with respect to a sum of weights} \label{sec:6}

\subsection{General considerations}\label{subsec6.5}

Now we step into a new territory. 
It is well known that a multiplicative modification of an orthogonality measure is easier to handle (from the point of view of the asymptotic theory) than an additive one. However, this kind of problems appears very naturally. 
For instance, a large class of transfer functions corresponding to time-delay systems can be written as the ratio of two functions, each of them a polynomial combination of exponentials (see e.g.~\cite{MR1783294}). In the simplest case, let
$f(x)=\sum\limits^m_{k=1}c_k e^{-\lambda_k x}$ where $\lambda_k>0$ and $c_k\in\overline{\mbb C}$, and we want to consider the best rational approximations of $f$ on a given set $K$: $\rho_n=\min\limits_{\Gamma\in\mbb R}\max\limits_{x\in  K}|f(x)-r(x)|$. This is a direct generalization of the central problem solved in \cite{Gonchar:87}; in particular, it can be reduced to the asymptotics of polynomials of non-Hermitian orthogonality with respect to a varying weight containing $f$, i.e. ~a \emph{sum} of exponentials.

In the rest of the paper we will consider mostly the simplest model situation, which still contains the most essential points, when polynomials $Q_n(z)=z^n+\dotsb$ are defined by orthogonality relations 
\begin{equation}
\label{Ort}
\int_F Q_n(z)z^k w_n(z)dz=0,\quad k=0,1,\dots,n-1,
\end{equation}
and the varying (depending on $n$) weight function has the form 
\begin{equation}
\label{w}
w_n(z)   =   e^{-k_1 n z} + e^{-k_2 n z}, \quad k_1, k_2 \in \C,
\end{equation}
where the integration goes along a curve
\begin{equation}
\label{C}
F \in \mc F := \{\text{rectifiable curve }   F \in \C  \text{ connecting } a_1  \text{ and }   a_2\},
\end{equation}
with $a_1, a_2 \in \C$ being two distinct   points fixed on the plane. 

Let us get ahead of ourselves and announce the status of the problem of the asymptotic zero distribution of polynomials $Q_n$: it  depends on the values of the parameters $a_1, a_2, k_1, k_2$. In some range the solution is essentially known; there is another parameters domain where a modification of known methods make it possible to prove a 
theorem on the zero distribution by generalizing known results. In particular a new equilibrium problem has to be introduced  in order to present these generalizations. Finally, there is a ``gray area'' in the space of parameters, where we can only show some results of numerical experiments and to state partial conjectures explaining those 
results. Boundaries between above mentioned domains in the space of parameters are not explicitly known yet; in some cases we  have partial results which are mentioned below. 

The integral in \eqref{Ort} resembles, at least formally, the orthogonality conditions in \eqref{eq1.7}, so it is natural to  analyze first the applicability of the GRS theory, explained in Section~\ref{sec:GRS}.
The starting assumption is that the weights $w_n(z) $ in \eqref{Ort} must be analytic in a domain $\Omega$ that contains points $a_1, a_2$, where the following limit exists: 
\begin{equation}
\label{Conv}
\varphi(z):=- \lim_{n\to \infty }\frac 1n \log {|w_n(z)|}   , \quad z \in \Omega.
\end{equation} 
Notice that in the original paper \cite{Gonchar:87} the convergence in the above formula was assumed to be uniform on compacts in $\Omega$, and consequently the function $\varphi$ was harmonic in $\Omega$. These assumption need to be relaxed for our purposes. For instance, if in the case \eqref{w} we assume $k_1> k_2$, then
\begin{equation} \label{phiparticularcase}
\varphi(z)= \min \left\{k_1 \Re z, k_2 \Re z \right\}=
\begin{cases}
k_2 \Re z, & \text{if } \Re z \ge 0,\\
k_1 \Re z, & \text{if } \Re z <0.
\end{cases}
\end{equation}
This function is continuous and piece wise harmonic in $\C$ (harmonic in the left $\C_-:=\{z\in \C:\, \Re z <0\}$ and right $\C_+:=\{z\in \C:\, \Re z >0\}$  half-planes). Moreover, it is superharmonic and continuous in the whole plane. Convergence in \eqref{Conv} is uniform on compact sets in the complement to the imaginary axis (notice that $\C_- \cup  \C_+$ is an open set). It is also easy to verify that for an arbitrary compact set in $\C$ the convergence is still uniform, but from above: for any compact $K \subset \C$ and any $\varepsilon >0$ there is $N(\varepsilon) \in \N$ such  that 
$$
\frac 1n \log {|w_n(z)|}  <   - \varphi(z) + \varepsilon , \quad z \in K \quad \text{for} \quad n \geq N(\varepsilon).
$$

In the light of this example it is natural to try to consider the analogue of the GRS theory under the assumptions that the external field $\varphi$ is continuous and piece-wise harmonic in $\Omega$, and convergence in \eqref{Conv} is locally uniform in the open sets where $\varphi$ is harmonic, and locally uniform from above in $\C$.

\subsection{Numerical experiments}\label{sec:numerical experiments}

Before continuing, and in order to gain some intuition, let us present the result of a few numerical experiments, finding the zeros of $Q_n$ given by \eqref{Ort}--\eqref{C}. These calculations can be carried out in a rather stable way using classical approach via moment determinants, see \cite[Chapter 2]{Gautschi:96}. Indeed, 
$$
\int  z^j e^{-s z}dz=(-1)^j s^{-j-1} \Gamma (j+1,-s z)+\const, \quad j\geq 0,
$$
where $\Gamma(a,z)$ is the incomplete gamma function, so that 
the moments
$$
\widehat \mu_j(s):=\int_F z^j e^{-s z}dz , \quad j\geq 0,
$$
can be either  evaluated  in terms of special functions or obtained from the recurrence (for $s\neq 0$)
$$
\widehat \mu_j(s)=\frac{a_1^{j}e^{-sa_1}-a_2^{j}e^{-sa_2}+ j \widehat \mu_{j-1}(s)}{s}, \qquad \widehat \mu_{-1}(s)=0.
$$
In practice, it might be convenient to apply this recurrence directly to the moments 
$$
 \mu_j^{(n)}:=\int_F z^j w_n(z) dz , \quad j\geq 0,
$$
in order to avoid cancellation errors.
Using the identities from \cite[Theorem 2.2]{Gautschi:96} we find the coefficients of the recurrence relation, satisfied by the polynomials, orthogonal with respect to $w_n$, and calculate their zeros as eigenvalues of the corresponding (complex) Jacobi matrix. All experiments were performed with Mathematica version~10.3, using an extended precision.
  
Let us discuss some of the pictures presented here. 

\begin{figure}[h]
	\centering 
	\hspace{-3mm} \begin{tabular}{c@{\hskip 3mm}c}
		\begin{overpic}[scale=0.65]{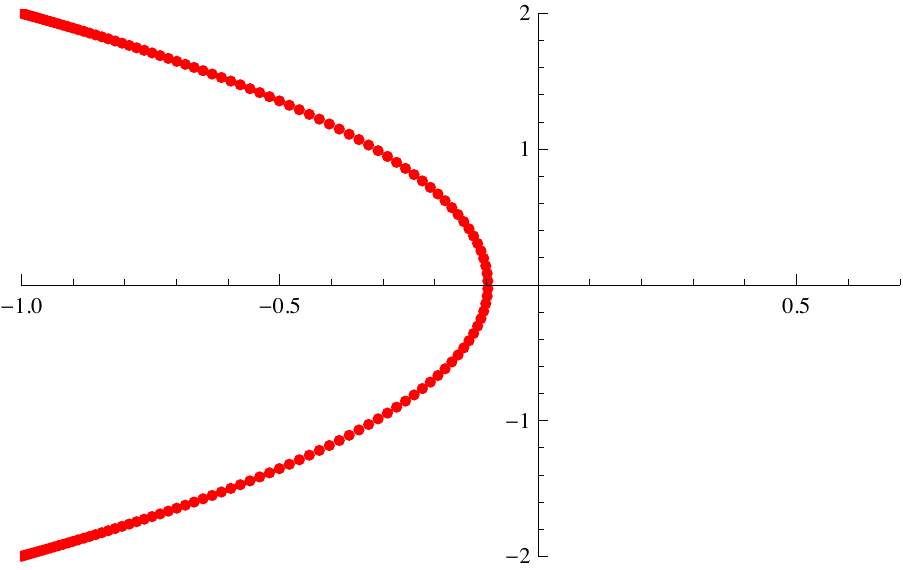}
		\end{overpic} & 
		\begin{overpic}[scale=0.65]{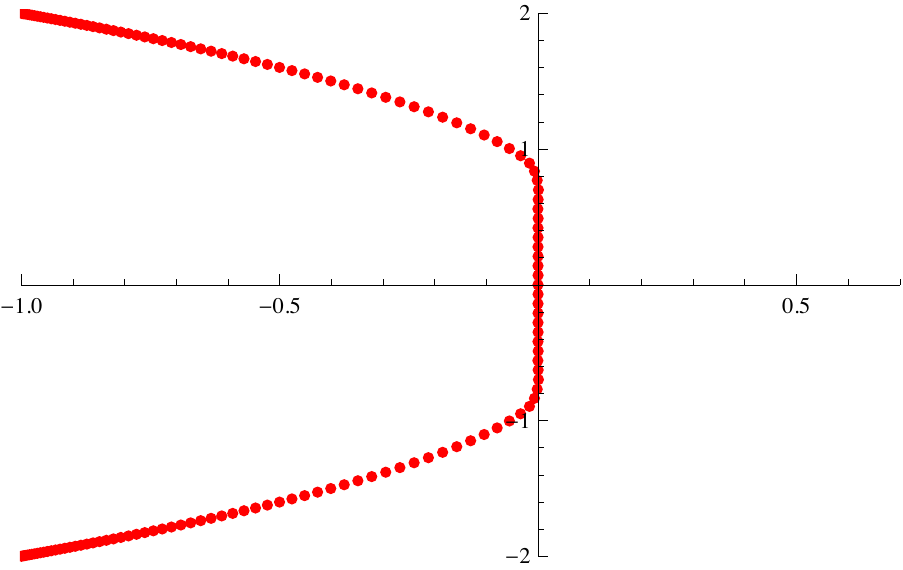}
		\end{overpic} \\
		\begin{overpic}[scale=0.65]{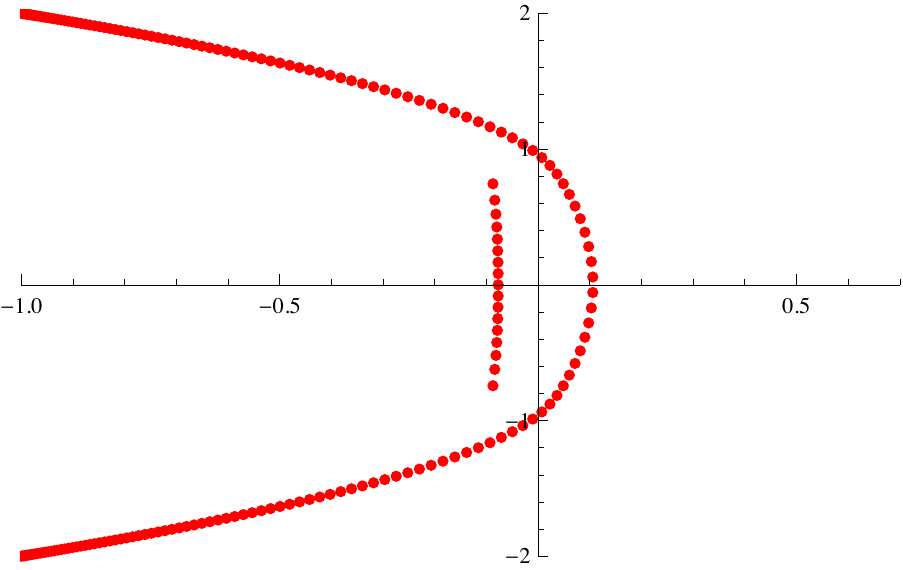}
		\end{overpic} & 
		\begin{overpic}[scale=0.65]{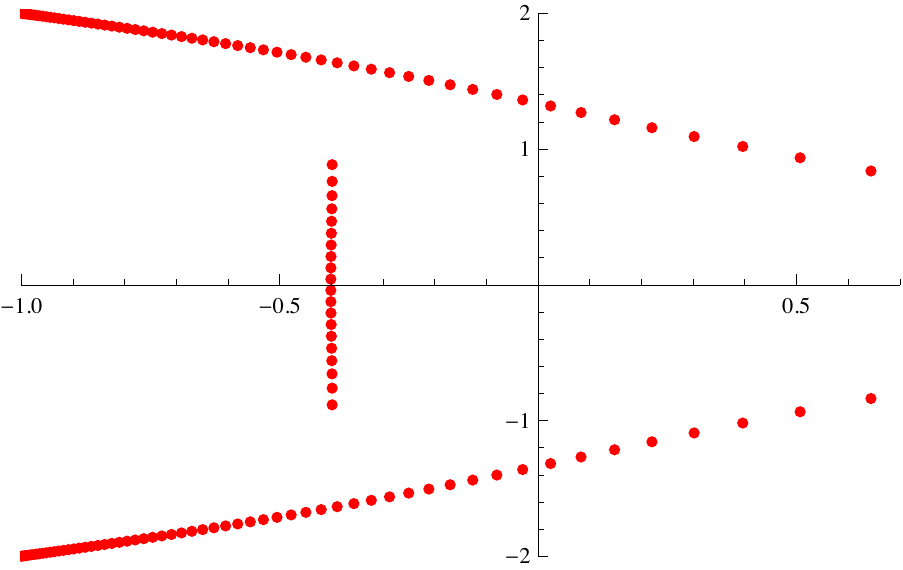}
		\end{overpic}
	\end{tabular}
	\caption{Zeros of $Q_{150}$ for $w_n(z) = 1+ e^{- k n z}$  with $a_1=-1- 2i=\overline{a_2}$ and and $k=0.4$ (top left), $k=0.6$ (top right), $k=0.66$ (bottom left) and $k=0.8$ (bottom right).}
	\label{FigExampleSaffConf}
\end{figure}

To start with, we consider the case when both $a_1$ and $a_2$ belong to the same half plane by modifying the orthogonality weight from Example~\ref{exampleSaffConf} and taking
\begin{equation}
\label{wExampleMod}
w_n(z)  = 1+ e^{- k n z} , \quad k\geq 0,
\end{equation}
with the integration in \eqref{Ort} along a Jordan arc $F$ connecting $a_1=-1- 2i$ and $a_2=-1+2i$, see Figure~\ref{FigExampleSaffConf}. The upper left picture is clear and should be compared with Figure~\ref{FigExampleExp}, left: all zeros lie in the left half-plane, where the resulting external field \eqref{phiparticularcase}, namely
\begin{equation}\label{phiexample2}
\varphi(z)=  
\begin{cases}
0, & \text{if } \Re z \ge 0,\\
k \Re z, & \text{if } \Re z <0,
\end{cases}
\end{equation}
is harmonic. The asymptotic distribution of zeros is $\lambda_{S,\varphi}$, and $\supp (\lambda_{S,\varphi})$ is the trajectory of the quadratic differential \eqref{quaddiffexample1}, joining $a_1$ and $\overline{a_1}$. 

Recall that as $k>0$ increases, this trajectory approaches the imaginary axis, until for certain $k=k_*\approx 0.43$ it touches $i\R$ at a single point.  At this point the $S$-property is no longer valid: it does hold for the external field
$k \Re z$, but not for \eqref{phiexample2}! What happens next is shown in Figure~\ref{FigExampleSaffConf}, top right. 

Now, recall that in the standard GRS theory the max-min property and the symmetry or $S$-property of the support are basically equivalent. For the piece-wise harmonic $\varphi$ this is no longer the case. Indeed, let $\Gamma$ be the critical trajectory of \eqref{quaddiffexample1} joining $a_1$ and $\overline{a_1}$ and that crosses the imaginary axis (so $k> k_*$), and let $\widehat \Gamma$ be the continuum obtained by replacing $\Gamma \cap \{ \Re z\geq 0 \}$ with the straight vertical segment joining the two points of intersection (see Figure~\ref{maxmin}).  Then
$$
\mc{E}_\varphi (\Gamma) \leq \mc{E}_\varphi (\widehat\Gamma)
$$
(see the definition in \eqref{Equ-3}). In other words, at least in the situation of Figure~\ref{FigExampleSaffConf}, top right, the zeros of $Q_n$ ``prefer'' to follow the max-min configuration instead of the $S$-curve\footnote{As a referee pointed out, the configuration of Figure~\ref{FigExampleSaffConf}, top right, could be formally considered as a limit case of what we describe in Theorem~\ref{thm2} below, where the $S$-property does play a role. Nevertheless, the situation is completely different here due to the existence of a string of zeros of the weight \eqref{wExampleMod} along the imaginary axis. In fact, zeros of $Q_n$, at least visually, are equally spaced on $i\R$, following the distribution of zeros of $w_n$ and not an equilibrium measure on an interval of $i\R$.}.

\begin{figure}[h]
	\centering 
	\hspace{-3mm} \begin{tabular}{c@{\hskip 15mm}c}
		\begin{overpic}[scale=0.9]{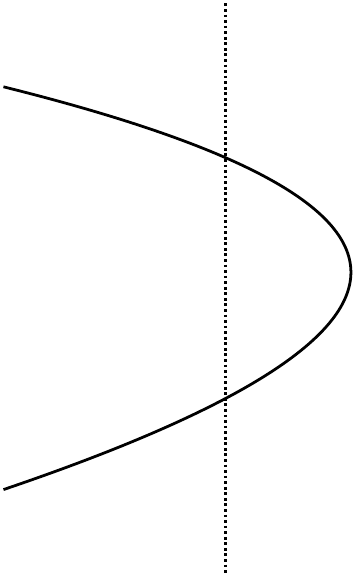}
						\put(5,32){ $\Gamma$}
\put(02,135){\small $\varphi(z)= k \Re z$}
\put(67,135){\small $\varphi(z)= 0$}						
		\end{overpic} & 
		\begin{overpic}[scale=0.9]{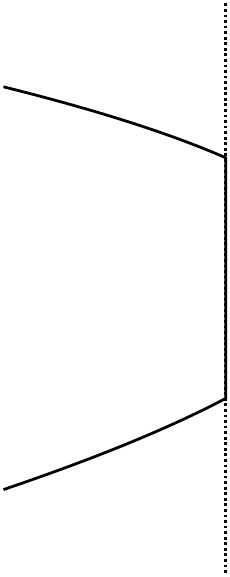}
\put(5,32){ $\widehat \Gamma$}
\put(02,135){\small $\varphi(z)= k \Re z$}
\put(67,135){\small $\varphi(z)= 0$}
		\end{overpic} 
	\end{tabular}
	\caption{Trajectory $\Gamma$ (left) and its projection $\widehat \Gamma$ onto $\C_-$.}
	\label{maxmin}
\end{figure}

This does not last too long: the possibility of a symmetry is recovered on
Figure~\ref{FigExampleSaffConf}, bottom row, which should be compared with Figure~\ref{FigExampleExp}. We observe the appearance of a new connected component of the zero distribution, whose contribution allows for the $S$-property of the support, but now in a ``vector'' equilibrium setting, see the discussion next.

\medskip

\begin{figure}[h]
	\centering 
	\hspace{-3mm} \begin{tabular}{c@{\hskip 3mm}c}
		\begin{overpic}[scale=0.65]{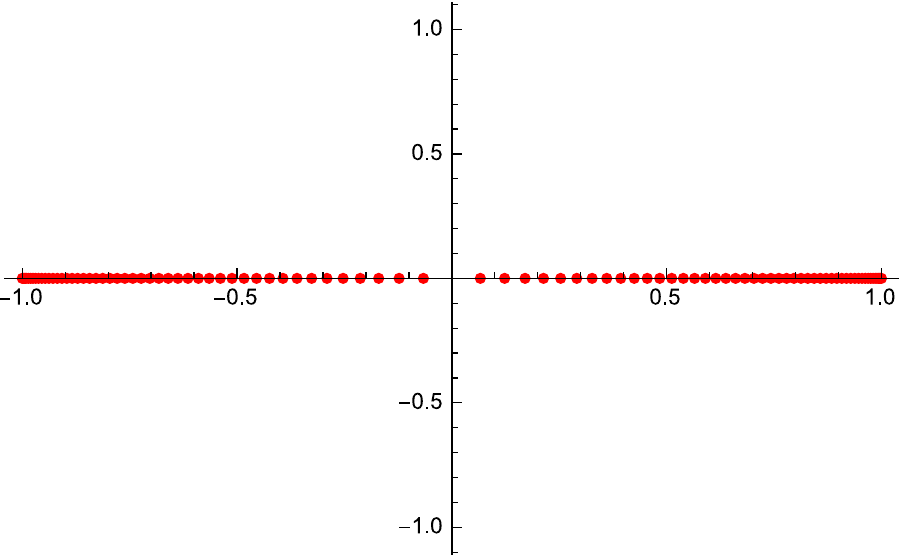}
		\end{overpic} & 
		\begin{overpic}[scale=0.65]{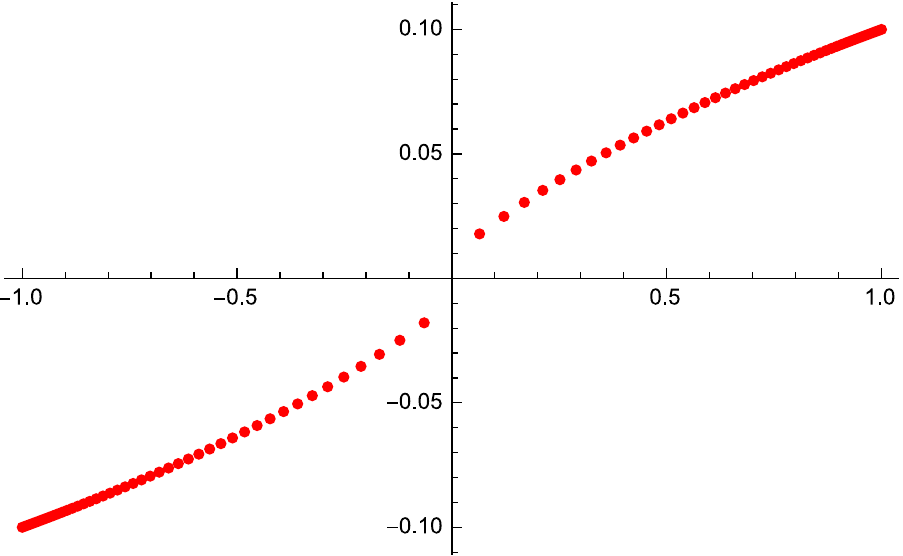}
		\end{overpic} 
	\end{tabular}
	\caption{Zeros of $Q_{100}$ for $w_n$ in \eqref{cosh} with $k=0.7$, for $a_2=-a_1=1$ (left) and $a_2=-a_1=1+0.1 i$ (right).}
	\label{FigCosReal}
\end{figure}

In the second set of experiments we consider the case  when $k_2=-k_1=k$, that is, 
$$
w_n(z)=e^{-k n z} + e^{k n z}=2 \cosh(k n z).
$$
In other words, $Q_{n}$ are now determined by the orthogonality conditions
\begin{equation}\label{cosh}
\int_{a_1}^{a_2} Q_{n}(z)z^k  \cosh(k n z) dz= 0,\quad k=0,1,\dots,n-1.
\end{equation}
In Figure~\ref{FigCosReal} we represent the zeros of $Q_{100}$ for $a_1=-a_2$; in particular, we take the totally real case $a_2=1$ (Figure \ref{FigCosReal}, left), and a slightly perturbed case $a_2=1+0.1 i$ (Figure \ref{FigCosReal}, right). Notice that all zeros apparently stay either in the left or right half-plane, which corresponds to the situation described by Theorem~\ref{thm2} below.

\begin{figure}[h]
	\centering 
	\hspace{-3mm} \begin{tabular}{c@{\hskip 3mm}c}
		\begin{overpic}[scale=0.65]{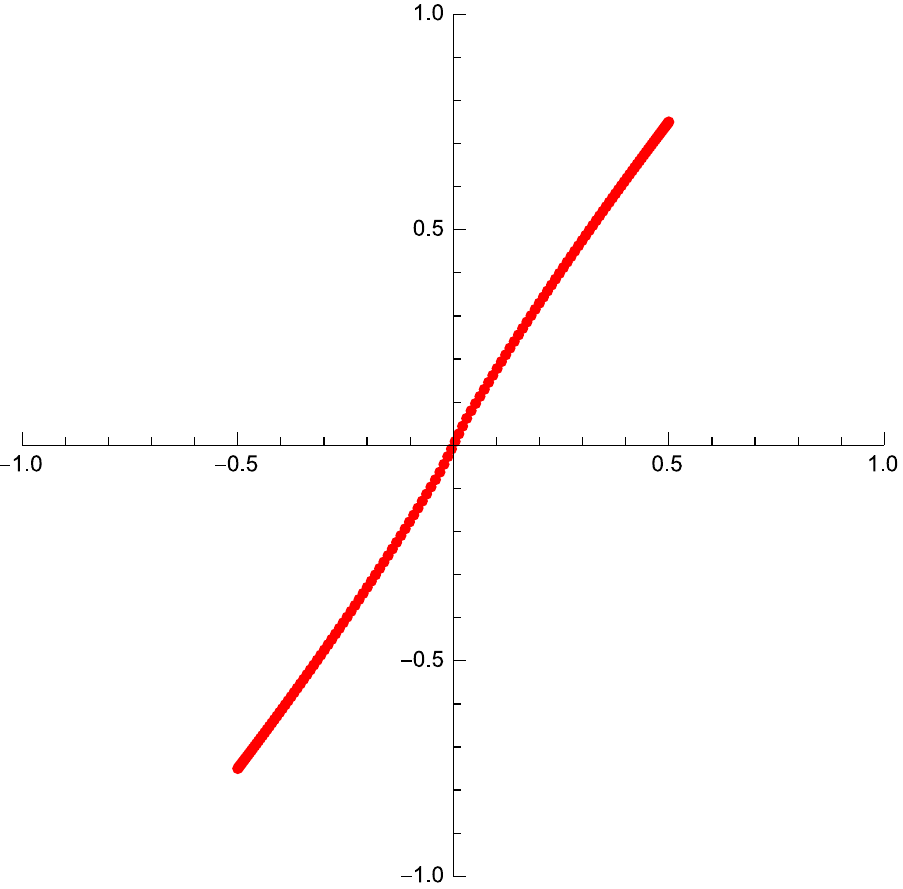}
		\end{overpic} & 
		\begin{overpic}[scale=0.65]{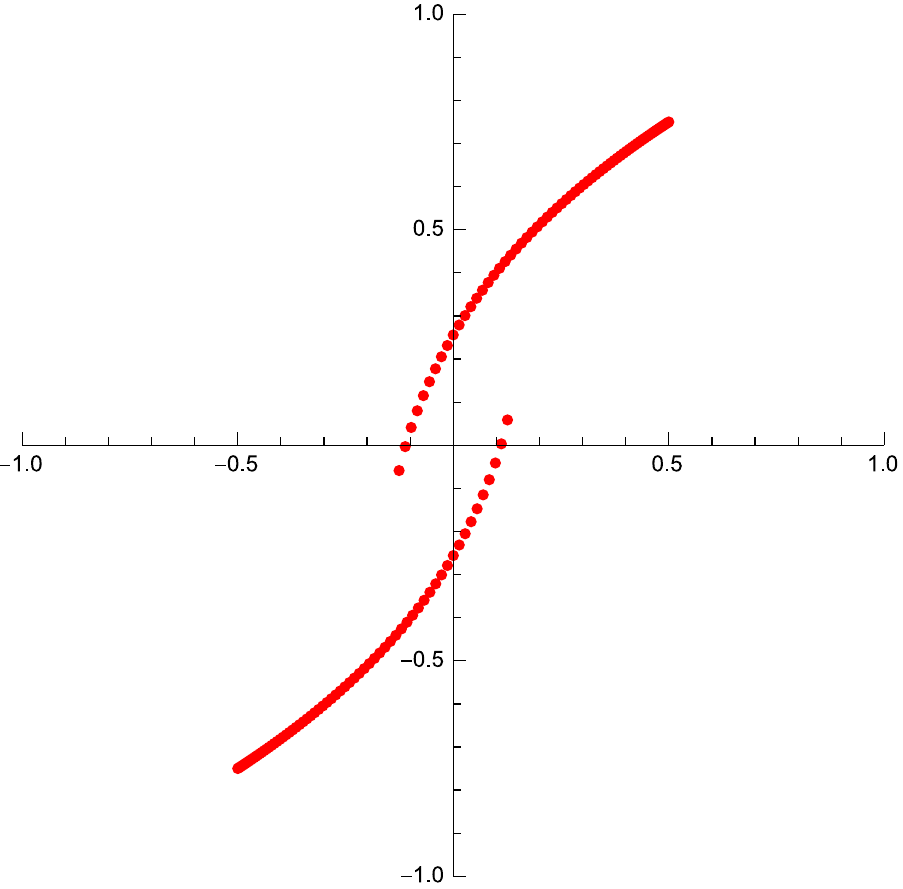}
		\end{overpic} 
	\end{tabular}
	\caption{Zeros of $Q_{150}$ for $w_n$ in \eqref{cosh} for $a_2=-a_1=0.5+0.75 i$, with $k=0.13$,  (left) and $k=1$ (right).}
	\label{FigAssym1}
\end{figure}

However, different values of the parameters $a_1$ and $k$ produce different pictures. 
In Figure~\ref{FigAssym1} we represent the zeros of $Q_{150}$ again for $a_1=-a_2$. But now  $a_2=-a_1=0.5+0.75 i$ with $k=0.13$ (left) and $a_2=-a_1=0.5+0.75 i$ with $k=1$ (right). It looks like the limit distribution  of the zeros of $Q_n$ can be now either a continuum, cutting the imaginary axis at a single point, or it can split into disjoint arcs, each or some of them also crossing the imaginary axis. We will give a partial answer (at least, a conjecture) for this case in our discussion in the next section.

\begin{figure}[h]
	\centering 
	\hspace{-3mm} \begin{tabular}{c@{\hskip 3mm}c}
		\begin{overpic}[scale=0.65]{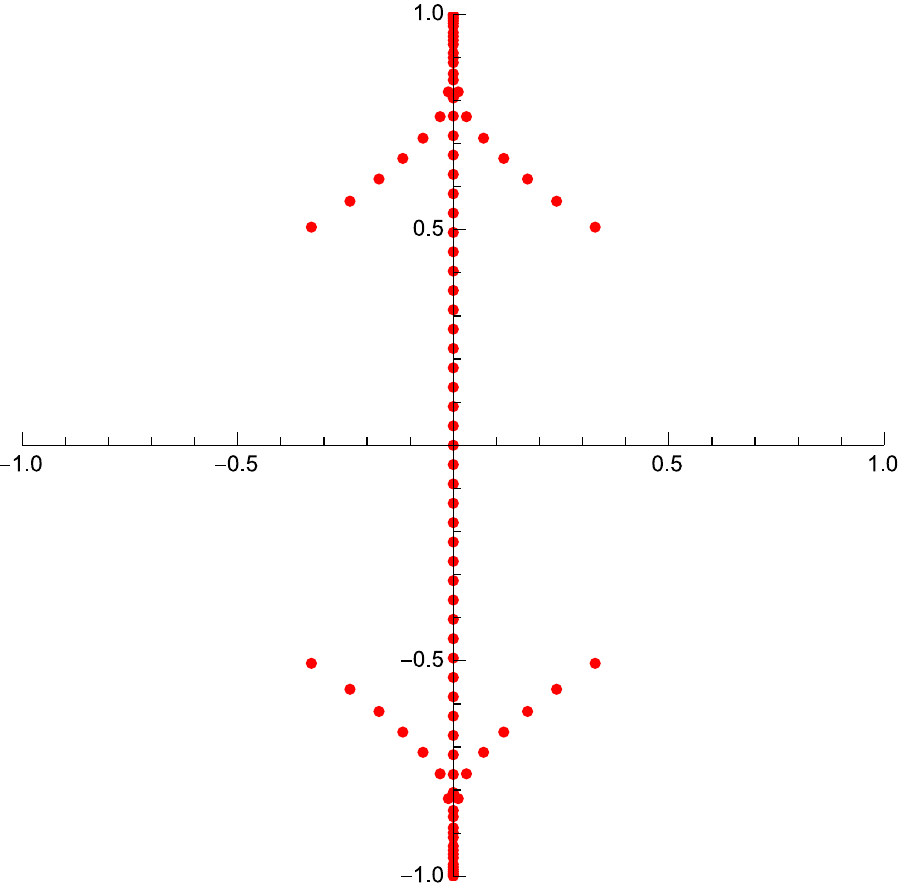}
		\end{overpic} & 
		\begin{overpic}[scale=0.65]{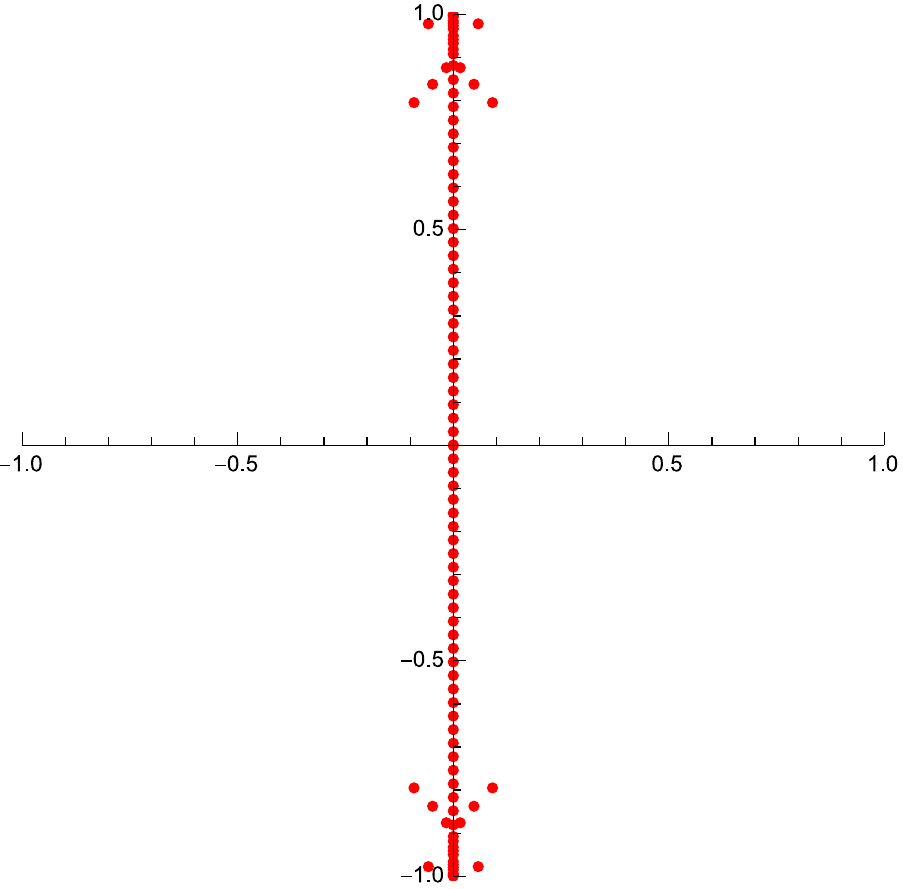}
		\end{overpic} \\
		\begin{overpic}[scale=0.65]{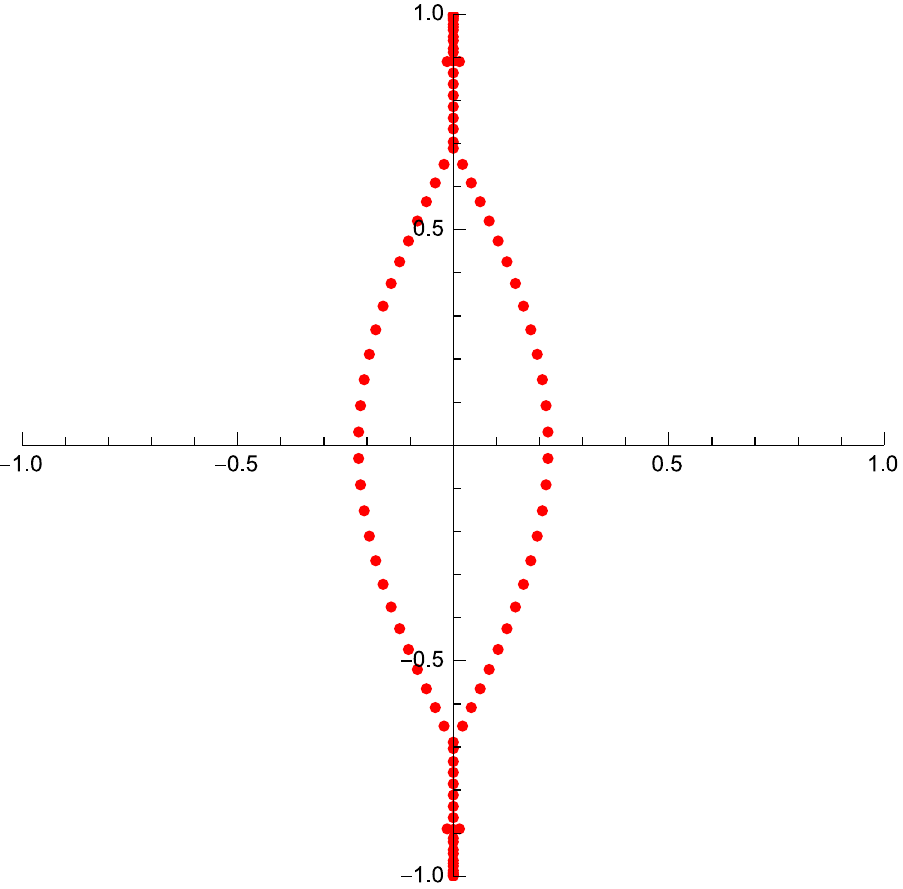}
		\end{overpic} & 
		\begin{overpic}[scale=0.65]{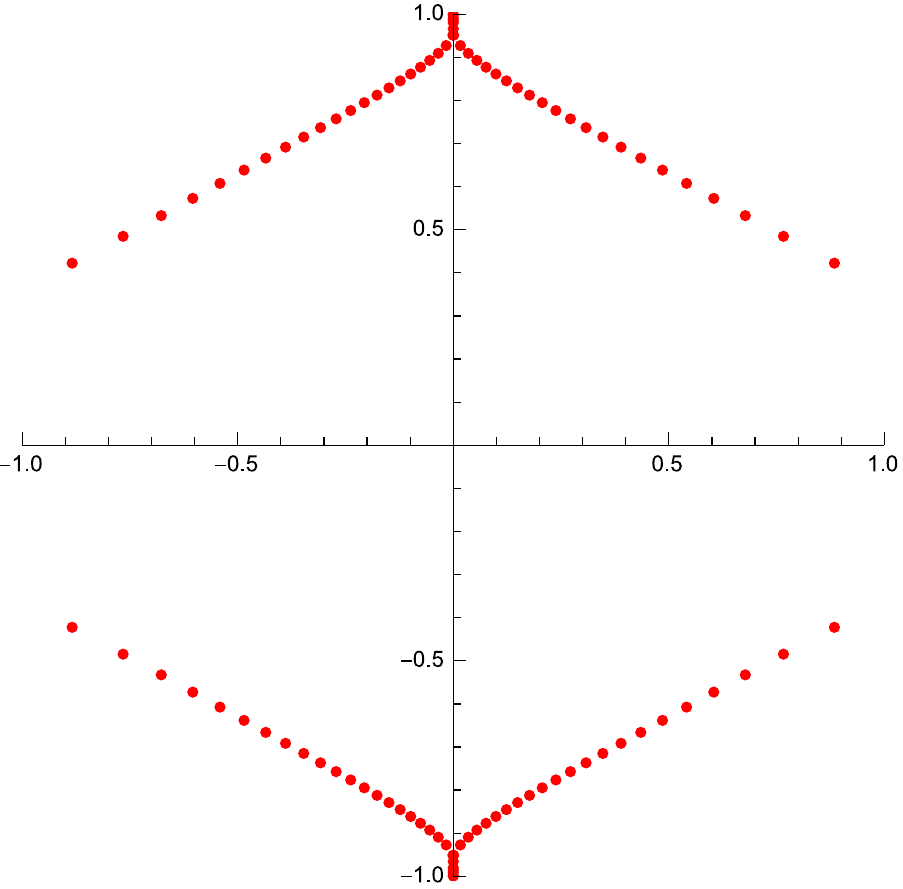}
		\end{overpic}
	\end{tabular}
	\caption{Zeros of $Q_{100}$ for $w_n$ in \eqref{cosh} with $a_2=-a_1=i$ and $k=0.7$ (top left), $k=1$ (top right), $k=1.2$ (bottom left) and $k=1.5$ (bottom right).}
	\label{FigCosImag}
\end{figure}

Let us finally reach the complete symmetry by analyzing the orthogonality \eqref{cosh}, with $a_2=i=-a_1$, Figure~\ref{FigCosImag}. Notice that now the path of integration can go entirely along the imaginary axis, precisely where the weight $w_n(z)=2 \cosh(k n z)$ has its zeros. A similar situation has been analyzed in \cite{Arno_Bessel14} for a semi-infinite interval.

We observe that the zeros of $Q_n$ in Figure~\ref{FigCosImag} can be split basically into three groups, two of them on $i\R$, and the rest on curves in the complex plane. Each curve belongs entirely (with a possible exception of its end points) to a half-plane $\C_\pm$. There are also to points $b_1=-ib=b_2$, $0<b<1$, such that the vertical segment $[-ib, ib]$ either contains no zeros of $Q_n$, or they are equally spaced, in concordance with the zeros of the weight $w_n$ there. The complement of this vertical segment in $[-i,i]$ also contains zeros of $Q_n$, but now their distribution is quite different, with a clear blow up in the density toward the endpoints $\pm i$. The whole picture resembles (but we do not claim at this point anything further beyond a formal similarity) the constrained equilibrium \cite{Rakhmanov:96,Dragnev97,Baik02,MR2283089}, with its void, bands and saturated regions, with the ``constraint'' created by the zeros of the weight.

Pictures in Figure~\ref{FigCosImag} may look puzzling also in the light of the electrostatic interpretation of the zeros of $Q_n$. Indeed, the cross section of the external field \eqref{phiparticularcase} along any line parallel to $\R$ is  convex upward (i.e., it looks like the graph of $-|x|$ on $\R$). Such a field should push the zeros away from the imaginary axis, while what we see is more appropriate of a field of the form $|x|$ on $\R$. Instead of relegating again the explanation of this phenomenon to the next section, let us point out here the simple but key fact for the orthogonality \eqref{Ort}--\eqref{C}, which allows us to ``swap'' the components of the external field \eqref{phiparticularcase}.

Indeed, the orthogonality condition in \eqref{Ort} can be equivalently rewritten as
$$
\int_{F_1} Q_n(z)z^k e^{-k_1 n z} dz+ \int_{F_2} Q_n(z)z^k    e^{-k_2 n z}dz=0,\quad k=0,1,\dots,n-1,
$$
where $F_1$, $F_2$ are \emph{any} two curves from the class $ \mc F$ defined in \eqref{C}. Alternatively, we could state it as
$$
\oint_{C} Q_n(z)z^k \widehat w_n(z) dz=0,\quad k=0,1,\dots,n-1,
$$
where $C$ is an oriented close Jordan contour on $\C$ passing through $a_1$ and $a_2$, and where $\widehat w_n(z)$ is defined either as $e^{-k_1 n z}$ or $-e^{-k_2 n z}$ on two distinct connected components of $C\setminus \{ a_1, a_2\}$.

We see now that we can actually ``pull'' the curve with $e^{k n z}$ to the right half-plane (assuming $k>0$), and the curve with $e^{-k n z}$ to the left one, so that the corresponding external field becomes just $-\varphi$, with $\varphi$ in \eqref{phiparticularcase}! So, we have traded a field with the convex upward cross section  along $\R$ for a convex downward one.

The trick we have just performed is an expression of the fact that the orthogonality with respect to a sum of weights can be associated to a vector equilibrium problem, similar to  those described in Section~\ref{subsec6.7}. For more details, see the discussion below.

\subsection{GRS theory for the for piece-wise harmonic external fields} \label{GRSpiecewise}
  
Recall that we are interested in a generalization of the  GRS theory under the assumptions that the external field $\varphi$ is continuous and piece-wise harmonic in $\Omega$, 
and convergence in \eqref{Conv} is as described at the end of Subsection~\ref{subsec6.5}. It turns out that such a generalization is rather straightforward 
when the support $\Gamma_\lambda$ of the equilibrium measure $\lambda_{\varphi,S}$ for the $S$-curve $S$ in the field $\varphi$ does not intersect the boundary  of the domains where $\varphi$ is harmonic.
  
  To be more precise, we assume that:
  \begin{enumerate}
  	\item[(i)]  the sequence $w_n \in H(\Omega)$ for a domain $\Omega$, 
  	\item[(ii)] there is a finite number of disjoint sub-domains $\Omega_1,\dots, \Omega_s$ of $  \Omega$ such that convergence in \eqref{Conv} is uniform on compact subsets of each $\Omega_j$ and locally uniform form above in $\Omega$,
  	\item[(iii)] $\varphi$  is continuous in $\Omega$.
  \end{enumerate}
 Observe that by our assumptions, $\varphi$  is also harmonic in each $\Omega_j $.   
 
 Given a finite set $\mathcal{A}=\left\{a_1,\dotsc,a_p\right\}$ of $p\ge2$ distinct points in $\Omega$, let $\mc{T}_{\mc{A}}$ denote the family of continua made of a finite union of oriented rectifiable arcs and containing $\mc{A}$.
  \begin{thm}\label{thm2} 
  	Under the assumptions above, let polynomials $Q_n(z)=z^n+\dotsb$ be defined by the orthogonality relations \eqref{Ort},	where the integration goes along $F\in \mc{T}_{\mc{A}}$. 
  	
  	If there exists $S\in \mc{T}_{\mc{A}}$ with the $S$-property in the external field $\varphi$ such that 
  	$$
  	\supp(\lambda_{S,\varphi})\subset \bigcup_{j=1}^s \Omega_j \quad \text{and} \quad \C\setminus \supp(\lambda_{S,\varphi}) \text{ is connected,}
  	$$ 
  	then $\chi(Q_n)\overset{*}{ \longrightarrow}\lambda_{S,\varphi}$.
  \end{thm}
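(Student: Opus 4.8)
The plan is to adapt the potential-theoretic proof of Theorem~\ref{GRSthm} from \cite{Gonchar:87} to the piece-wise harmonic situation, the point being that the only place where the harmonicity of $\varphi$ on all of $\Omega$ was used in the original argument is in the vicinity of $\supp(\lambda_{S,\varphi})$, and our hypothesis $\supp(\lambda_{S,\varphi})\subset\bigcup_j\Omega_j$ guarantees harmonicity there. First I would fix the $S$-curve $S$ and its equilibrium measure $\lambda:=\lambda_{S,\varphi}$, together with the equilibrium constant $\omega$ from \eqref{Equ-2} and the function $g(z):=U^\lambda(z)+\varphi(z)-\omega$, which is $\le 0$ on $\supp(\lambda)$ by the variational inequality and equals $0$ on it. The first block of the proof is the standard upper bound for the orthogonal polynomials: using \eqref{eq1.7} together with the Cauchy--Schwarz/Hadamard-type estimate one shows that, along the specific deformation of $F$ onto (a neighborhood of) $S$ permitted by analyticity of $w_n$ in $\Omega$, the weighted $L^2$ norm of $Q_n$ cannot be too small, giving $\limsup_n \frac1n\log\|Q_n w_n^{1/2}\|_{L^2(S)}\le -\omega + o(1)$; here the ``locally uniform from above'' convergence in \eqref{Conv} is exactly what is needed, since it only ever enters through upper bounds $\frac1n\log|w_n|\le-\varphi+\varepsilon$ on compacts, which hold on all of $\C$ by hypothesis~(ii).

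The second block is the lower bound, where harmonicity is genuinely used. One passes to a weak-* limit point $\mu$ of a subsequence of $\chi(Q_n)$; by compactness $\mu$ is a probability measure supported in the polynomial convex hull of the relevant sets. The orthogonality relations force $Q_n$ to be small in a weighted sense near $S$, and the standard ``$n$-th root'' analysis (principle of descent, lower envelope theorem for potentials) yields that $U^\mu+\varphi\ge\omega$ quasi-everywhere on $\supp\mu$ with equality on a set carrying the mass, i.e.\ $\mu$ satisfies the same variational characterization \eqref{Equ-2} as $\lambda$ — \emph{provided} one can show $\supp\mu\subset\bigcup_j\Omega_j$ (so that $\varphi$ is harmonic and the potential-theoretic maximum principle applies locally) and that $\supp\mu$ has connected complement (so the uniqueness of the equilibrium measure kicks in). The connectedness of $\C\setminus\supp(\lambda)$ in the hypothesis is used, as in \cite{Gonchar:87}, to rule out ``extra'' components of $\supp\mu$ via a maximum-principle argument on $g$: on the unbounded component of $\C\setminus\supp\mu$ the function $\frac1n\log|Q_n|$ is subharmonic, bounded, and on $S$ controlled by the upper bound from the first block, forcing $\mu$ onto $S$ with the correct normalization and hence $\mu=\lambda$ by uniqueness. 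Since every subsequential limit equals $\lambda$, the full sequence converges, $\chi(Q_n)\overset{*}{\to}\lambda$.

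The main obstacle is controlling the location of $\supp\mu$ near the boundary $\partial\Omega_j$ where $\varphi$ merely piece-wise harmonic (e.g.\ the imaginary axis in \eqref{phiparticularcase}), because there $\varphi$ is only superharmonic and the usual two-sided maximum-principle arguments that pin down $\supp\mu$ fail. The hypothesis $\supp(\lambda_{S,\varphi})\subset\bigcup_j\Omega_j$ is imposed precisely to sidestep this: one must show a priori — not assume — that $\supp\mu$ also avoids a neighborhood of $\bigcup_j\partial\Omega_j$. The cleanest way is a comparison/normal-families argument: on a fixed compact neighborhood $N$ of $\supp(\lambda_{S,\varphi})$ contained in $\bigcup_j\Omega_j$, the convergence $\frac1n\log|w_n|\to-\varphi$ is genuinely uniform (hypothesis (ii)), so the weighted polynomials $Q_n w_n^{1/2}$ behave there exactly as in the classical GRS setting, and the classical estimates confine \emph{all but $o(n)$} zeros to $N$; the remaining $o(n)$ zeros contribute nothing to the weak-* limit. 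Finishing this confinement step rigorously — in particular, showing no positive fraction of zeros escapes toward $\partial\Omega_j$ where the ``from above'' bound is the only available control — is the delicate point; everything else is a routine transcription of the proof in \cite{Gonchar:87}.
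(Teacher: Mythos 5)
Your plan is, in substance, the same as the paper's proof: the paper's entire argument for Theorem~\ref{thm2} is the observation that the proof in \cite{Gonchar:87} applies without essential modification, precisely because harmonicity of $\varphi$ is only ever invoked in a neighborhood of $\supp(\lambda_{S,\varphi})$ (which the hypothesis $\supp(\lambda_{S,\varphi})\subset\bigcup_{j}\Omega_j$ secures), while away from the support only the upper bound $\frac1n\log|w_n|\le -\varphi+\varepsilon$ on compacts is used --- exactly what the locally-uniform-from-above convergence in \eqref{Conv} provides. Where you deviate is in what you single out as the main obstacle: the Gonchar--Rakhmanov scheme does not require any a priori confinement of $\supp\mu$, or of all but $o(n)$ zeros, inside $\bigcup_j\Omega_j$. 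The original argument runs through the functions of the second kind $R_n(z)=\int Q_n(t)w_n(t)(t-z)^{-1}dt$ and the product $Q_nR_n$: orthogonality makes $R_n$ small at infinity, the $S$-property together with harmonicity of $\varphi$ near $\supp(\lambda_{S,\varphi})$ allows $U^{\lambda}+\varphi-\omega$ to be continued harmonically across the arcs of the support, and the resulting two-sided exponential estimates determine $\lim\frac1n\log|Q_n|$ off the support; connectedness of $\C\setminus\supp(\lambda_{S,\varphi})$ then identifies the limit of $\chi(Q_n)$ uniquely, with zeros permitted to lie anywhere, including near $\partial\Omega_j$. So the confinement lemma you leave unfinished is not an obstruction in the paper's route (indeed the paper notes the statement extends even when $\supp(\lambda)$ meets the discontinuity set of $\varphi'$ in finitely many points). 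A minor further point: your first block should not be phrased through an $L^2$-minimality or a lower bound on $\|Q_nw_n^{1/2}\|_{L^2}$, which is unavailable for non-Hermitian orthogonality; the upper estimates are run through $R_n$ on $S$. Reorganized along these lines, your proposal coincides with the paper's proof.
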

  Recall that $\lambda_{S,\varphi}$ is the equilibrium measure on $S$ in the external field $\varphi$ and satisfying the symmetry property \eqref{SpropertyExtF}.
  
  The proof of the original version of the theorem in \cite{Gonchar:87} is directly applicable to the slightly more general conditions of Theorem~\ref{thm2}, without any essential modification. In fact, it is possible to push the situation even further and prove the assertion when $\supp (\lambda)$ cuts the set of discontinuities of $\varphi'$ in a finite number of points.
  
  Let us summarize the most important assumptions of Theorem~\ref{thm2}:
  \begin{enumerate}
  	\item[(a)] the existence of an $S$ curve, and
    \item[(b)] the fact that $\varphi$ is harmonic in a neighborhood of  the support of $\lambda_{S,\varphi}$.
  \end{enumerate}
  
  It is not an easy problem to find general conditions in terms of the field $\varphi$ (or equivalently, of the weights $w_n$) under which both (a) and (b) are automatically satisfied. In our model case \eqref{Ort}--\eqref{C} it is possible at least to indicate a system of equations (see Theorem~\ref{thm-3} ) which could lead to a complete or partial solution of the problem.

  \subsection{Pseudo-vector equilibrium and critical measures}
\label{sec:pseudo}

Recall the definition of the vector equilibrium from Section~\ref{subsec6.7}, which involves a vector of $p\geq 1$ nonnegative measures $\vec\mu=(\mu_1,\dots,\mu_p) $, a symmetric and positive-semidefinite interaction matrix $T=(\tau_{j,k})\in \R^{p\times p}$, and a vector of external fields $\vec\varphi=(\varphi_1,\dots,\varphi_p)$, along with a set of constraints on the size of the components of $\vec \mu$.  The equilibrium measure is the minimizer of the total vector energy functional \eqref{energy_functional}.

We call such a problem \emph{pseudo-vector} if all entries of the interaction matrix $T$ are just $1$'s. The terminology comes from the observation that in this case the total energy is just
\begin{equation}\label{energy_pseudo}
E(\vec \mu)=E_{\vec\varphi}(\vec\mu)= E(\mu)+2\sum_{j=1}^p\int \varphi_j d\mu_j, \quad \mu=\sum_{j=1}^p \mu_j.
\end{equation}

Let us return to our model problem \eqref{Ort}, \eqref{C}, that is,
\begin{equation}\label{orthcondpseudo}
\int_F Q_n(z)z^k w_n(z)dz=0,\quad k=0,1,\dots,n-1,
\end{equation}
where the integration goes along a curve
\begin{equation}
\label{Cpseudo}
F \in \mc F := \{\text{rectifiable curve }   F \in \C  \text{ connecting } a_1  \text{ and }   a_2\},
\end{equation}
with $a_1, a_2 \in \C$ being two distinct   points fixed on the plane, but allowing for a more general form the weight: 
\begin{equation}
\label{w111}
w_n(z)   =   \sum_{j=1}^p e^{-2n\Phi_{n,j}(z)} g_j(z) . 
\end{equation}
For simplicity, we assume that $\Phi_{n,j}$ and $g_j$ are entire functions, and that $\Phi_{n,j}$ converge locally uniformly in $\C$ (as $n\to \infty$) to a function $\Phi_j(z)$.  

As it was observed at the end of Section~\ref{sec:numerical experiments}, conditions \eqref{orthcondpseudo}--\eqref{Cpseudo} are equivalent to
\begin{equation}\label{orthcondpseudoNew}
\sum_{j=1}^p  \int_{F_j} Q_n(z)z^k e^{-2n\Phi_{n,j}(z)} g_j(z)dz=0,\quad k=0,1,\dots,n-1,
\end{equation}
where each $F_j \in \mc F$, and curves $F_j$ can be taken independently from each other. 

This reformulation motivates the introduction of the following pseudo-vector problem: for any vector $\vec F=(F_1, \dots, F_p)\in \mc F^p$, and the vector of external fields $\vec\varphi=(\varphi_1,\dots,\varphi_p)$, $\varphi_j=\Re \Phi_j$, harmonic on $\C$, consider the energy functional \eqref{energy_pseudo} with the additional constraints that $\supp(\mu_j)\subset F_j$, $j=1, \dots, p$, and 
$$
|\mu|:=\sum_{j=1}^p |\mu_j|=\sum_{j=1}^p \int d\mu_j=1.
$$
As usual, the \textit{(pseudo-vector) equilibrium measure} $\vec \lambda=(\lambda_1,\dots,\lambda_p)=\lambda_{\vec F,\vec \varphi} $ will be the minimizer of \eqref{energy_pseudo} in this class.
It is characterized by the variational conditions
\begin{equation} \label{equilibriumPseudo}
\left(U^\lambda+\varphi_j\right)(z) \begin{cases}
=\omega, & z\in\supp(\lambda_j), \\
\ge \omega, & z\in F_j,
\end{cases}\quad j=1, \dots, p, \quad \lambda =\sum_{j=1}^p \lambda_j,
\end{equation}
(compare with \eqref{Equ-2}), where $\omega$ is the equilibrium constant.

The $S$-property in the external field $\vec\varphi$ has the form now 
\begin{equation}\label{SpropertyPseudo}
\frac{\partial (U^\lambda+\varphi_j)(z)}{\partial n_+}=\frac{\partial (U^\lambda+\varphi_j)(z)}{\partial n_-},\quad z\in \supp (\lambda_j)\setminus e, \quad j=1, \dots, p,
\end{equation}
and $\cp(e)=0$.

We have seen that equilibrium measures on sets with the $S$-property are a subclass of the critical measures, defined by \eqref{variations_critical_measure}. This subclass is proper: for each \emph{critical} measure $\vec \lambda=(\lambda_1,\dots,\lambda_p) $ we still have
$$
\left(U^\lambda+\varphi_j\right)(z) \begin{cases}
=\omega_j, & z\in\supp(\lambda_j), \\
\ge \omega_j, & z\in F_j,
\end{cases}\quad j=1, \dots, p, 
$$
but not necessarily   
\begin{equation}\label{omegaequal}
\omega_1=\dots =\omega_p.
\end{equation}
Moreover, even if \eqref{omegaequal} is true, we still need to be able to complete the supports of each $\lambda_j$ to the whole curves $F_j$ in such a way that the inequalities in \eqref{equilibriumPseudo} hold in order to claim that  $\lambda=\lambda_{\vec F,\vec \varphi}$.

Following the arguments of \cite{MR2770010,AMFSilva15} we see that the variational condition \eqref{variations_critical_measure}, when considered for functions of the form $h_z(t)=A(t)/(t-z)$, $A(z)=(z-a_1)(z-a_2)$, yield the following equation for the Cauchy transforms $C^{\lambda_j}$ of $\lambda_j$, see the definition in \eqref{principalvalue}:
\begin{equation}\label{variationalEqPseudo}
A(z)\left( \left( C^\lambda \right)^2 + 2\sum_{j=1}^p \Phi_j'  C^{\lambda_j}\right)(z)=B(z), \quad \lambda =\sum_{j=1}^p \lambda_j,
\end{equation}
where $B$ is a polynomial. This identity is valid a.e.~in $\C$ with respect to the plane Lebesgue measure.

Notice that one of the immediate conclusions is that
\begin{equation}\label{variationalEqPseudoBis}
A(z)\left(   C^\lambda  +  \Phi_j'  \right)^2(z) \in H(\C\setminus \lambda_j'), \quad j=1, \dots, p,
\end{equation}
where $\lambda_j'=\sum_{m=1,\,  m\neq j}^p \lambda_m$, and that each $\lambda_j$ lives on a trajectory of a quadratic differential, but now not necessarily rational. In order to study the structure of such trajectories it would be very convenient to associate with \eqref{variationalEqPseudo} an algebraic equation, in the spirit of what was done in \cite{AMFSilva15}, which is part of an ongoing project. 

The standard GRS theorem (Theorem~\ref{GRSthm}) is directly applicable in the pseudo-vector case when different components of the support of $\vec \lambda$ do not intersect (see, e.g., Figure~\ref{FigAssym1}, right):
\begin{thm}\label{thmnew} 
	Under the assumptions above, let polynomials $Q_n(z)=z^n+\dotsb$ be defined by the orthogonality relations \eqref{orthcondpseudo}--\eqref{w111}.
	If there exists a vector $\vec S=(S_1, \dots, S_p)$, $S_j\in \mc{F}$, with the $S$-property in the external field $\vec\varphi$, and the corresponding equilibrium measure $\lambda=\lambda_{\vec S,\vec \varphi}=(\lambda_1,\dots,\lambda_p)$  such that 
	$$
	\C\setminus \bigcup_{j=1}^p \supp(\lambda_j) \quad \text{is connected, and} \quad \supp(\lambda_j)\cap  \supp(\lambda_m)=\emptyset \quad   \text{for } j\neq m,
	$$ 
	then $\chi(Q_n)\overset{*}{ \longrightarrow}\lambda_1+\dots+\lambda_p$.
\end{thm}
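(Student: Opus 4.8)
The plan is to run the proof of Theorem~\ref{GRSthm} on the split form \eqref{orthcondpseudoNew} of the orthogonality relations, the hypothesis $\supp(\lambda_j)\cap\supp(\lambda_m)=\emptyset$ being precisely what makes every local estimate reduce to the scalar situation treated in \cite{Gonchar:87}. Set $\Sigma_j:=\supp(\lambda_j)$; these are pairwise disjoint compacts, so they admit pairwise disjoint open neighbourhoods $U_j\supset\Sigma_j$. Since by \eqref{orthcondpseudoNew} the contours $F_1,\dots,F_p\in\mc F$ may be chosen independently and each integrand $Q_n(z)z^k e^{-2n\Phi_{n,j}(z)}g_j(z)$ is entire, I would first take $F_j:=S_j$ (so $\Sigma_j\subset F_j$ and, by \eqref{equilibriumPseudo}, $U^\lambda+\varphi_j\ge\omega$ on $F_j$), and then deform $F_j\setminus U_j$, keeping the endpoints $a_1,a_2$ fixed, into the region $\{z:(U^\lambda+\varphi_j)(z)>\omega\}$; this deformation is unobstructed because $\C\setminus\bigcup_j\Sigma_j$ is connected.

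Next comes the upper estimate. Introduce the function of the second kind $R_n(z)=\sum_{j=1}^p\frac1{2\pi i}\int_{F_j}\frac{Q_n(t)w_{n,j}(t)}{t-z}\,dt$ with $w_{n,j}(z)=e^{-2n\Phi_{n,j}(z)}g_j(z)$; the orthogonality relations are equivalent to $R_n(z)=\mathcal O(z^{-n-1})$ as $z\to\infty$. The standard ``multiply and divide'' manipulation still works termwise and gives
\[
Q_n(z)R_n(z)=\sum_{j=1}^p\frac1{2\pi i}\int_{F_j}\frac{Q_n^2(t)\,w_{n,j}(t)}{t-z}\,dt,
\]
because $\sum_j\int_{F_j}Q_n(t)\,q(t)\,w_{n,j}(t)\,dt=0$ for every $q\in\P_{n-1}$. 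On $F_j\setminus U_j$ one has $|w_{n,j}(t)|^{1/n}\to e^{-2\varphi_j(t)}$ while $(U^\lambda+\varphi_j)(t)>\omega$, so these portions of the contours contribute negligibly; combining the displayed identity with the decay of $R_n$ at infinity and the maximum principle in the unbounded component $\Omega_\infty$ of $\C\setminus\bigcup_j\Sigma_j$ yields $\limsup_n\frac1n\log|Q_n(z)|\le-U^\lambda(z)+c$ on $\Omega_\infty$, i.e.\ $U^\sigma\ge U^\lambda-c$ there, for every weak-$*$ limit $\sigma$ of a subsequence of $\chi(Q_n)$, with $c\ge0$ a constant.

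The matching lower bound is the $S$-property (lens-opening) step, and it is here that disjointness is essential. Near $\Sigma_j$ write $U^\lambda=U^{\lambda_j}+U^{\lambda_j'}$ with $\lambda_j'=\sum_{m\ne j}\lambda_m$; since $\supp(\lambda_j')\cap U_j=\emptyset$, the potential $U^{\lambda_j'}$ is harmonic in $U_j$, so in $U_j$ the set $\Sigma_j$ is a \emph{scalar} $S$-curve for $\lambda_j$ in the locally harmonic external field $\varphi_j+U^{\lambda_j'}$: on $\Sigma_j$ one has $U^\lambda+\varphi_j=\omega$ by \eqref{equilibriumPseudo}, and by \eqref{SpropertyPseudo} the two one-sided normal derivatives of $U^\lambda+\varphi_j$ across $\Sigma_j$ coincide (and are $\ge0$). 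Exactly as in \cite{Gonchar:87}, $F_j\cap U_j$ can then be opened into a lens --- a pair of arcs on opposite sides of $\Sigma_j$ --- with each arc contributing a factor $e^{-\varepsilon n}$. Bootstrapping this against the upper estimate pins down $\frac1n\log|Q_n(z)|\to-U^\lambda(z)$ locally uniformly on $\Omega_\infty$, hence $U^\sigma\equiv U^\lambda$ there; since $U^\lambda$ is harmonic on $\Omega_\infty$ this forces $\sigma$ to carry no mass in $\Omega_\infty$, so $\supp\sigma\subset\bigcup_j\Sigma_j$, and the unicity theorem for logarithmic potentials gives $\sigma=\lambda_1+\dots+\lambda_p$. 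As the limit does not depend on the subsequence, $\chi(Q_n)\overset{*}{\longrightarrow}\lambda_1+\dots+\lambda_p$.

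The main obstacle is the lens-opening step together with the contour bookkeeping of the first paragraph: one must verify that the tails $F_j\setminus U_j$ are exponentially negligible and that the lens across each $\Sigma_j$ gains a uniform exponential factor. These are exactly the estimates carried out in \cite{Gonchar:87}, and the point of the two hypotheses is to make them applicable verbatim --- all $\Phi_j$ being entire keeps each $\varphi_j$ harmonic throughout $\C$, so there are no discontinuity lines of the field to cope with (contrast Theorem~\ref{thm2}), and the pairwise disjointness of the $\Sigma_j$ confines the $S$-property analysis to a neighbourhood of a single component, where it is literally scalar. The only genuinely new ingredient is the simultaneous, obstruction-free routing of the $p$ contours $F_j$, which is supplied by the connectedness of $\C\setminus\bigcup_j\Sigma_j$.
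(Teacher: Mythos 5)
Your proposal is correct and follows essentially the same route as the paper, which establishes Theorem~\ref{thmnew} precisely by remarking that, after splitting the orthogonality as in \eqref{orthcondpseudoNew}, the pairwise disjointness of the supports makes the Gonchar--Rakhmanov argument behind Theorem~\ref{GRSthm} (see \cite{Gonchar:87}) applicable verbatim --- your localization to the scalar field $\varphi_j+U^{\lambda_j'}$ near each $\supp(\lambda_j)$, the free routing of the independent contours, and the final unicity step are exactly what that remark encapsulates. (One cosmetic slip: on $\supp(\lambda_j)$ the coinciding one-sided normal derivatives of $U^\lambda+\varphi_j$ equal $-\pi$ times the density of $\lambda_j$, hence are $\le 0$ rather than $\ge 0$; this does not affect the argument.)
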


When some components of the support of $\vec \lambda$ do intersect, or more precisely, when they contain a common set of positive capacity, the situation is totally open\footnote{ Strictly speaking, even the situation when the components of the support of $\vec \lambda$ are simple arcs with common endpoints is already interesting.}. It looks like in this case the max-min characterization of the support of $\vec \lambda$ dominates over its $S$-property, but at this stage these are mere speculations.
 
Meanwhile, it is difficult to obtain anything further from \eqref{variationalEqPseudo}, \emph{unless} all $\Phi_j$ are linear functions, case we consider next.

  \subsection{Piece-wise linear external field}
  
The assumption that $\varphi$ is piece-wise linear allows for a crucial simplification of the problem, using the observation that after differentiating in \eqref{variationalEqPseudoBis} all the external fields disappear. 

So, let us return to the situation \eqref{Ort}--\eqref{C}, so that $\varphi$ is \eqref{phiparticularcase}. The problem contains four parameters: $a_1, a_2 \in \C$ and $k_1, k_2 \in \R,$ or six real parameters. Finding the domain in the space of parameters for which an $S$-curve does not exist is a formidable task. It seems more convenient to start with an assumption that the $S$-curve does exist and make an ansatz on the structure of the support of its equilibrium measure; in  a piece-wise linear field  and curves in the class ${\mc F}$ the support should consist of one or two arcs. Then a straightforward method of proving existence theorems may be used, which we can shortly outline as follows: the analytic representation of the Cauchy transform of the equilibrium measure has four unknown complex parameters, constrained by a system of equations expressing the main properties of the problem.  Using any solution of such a system one  can prove existence of $S$-curve by  direct construction. In a similar situation the method has been used in
\cite{Gonchar:87}.

For the orthogonality  \eqref{Ort}--\eqref{C} and the corresponding external field \eqref{phiparticularcase} this approach yields the following result:
\begin{thm} \label{thm-3} 
	Assume that $S$  is an $S$-curve  in the external field \eqref{phiparticularcase}, that the support of the equilibrium measure $\lambda=\lambda_{S,  \varphi}$ does not intersect the imaginary axis and that it has at most two connected components. Then the following assertions are valid: 
	\begin{itemize}
		\item[(i)] The are two quadratic polynomials,
		\begin{equation}
		\label{CT-1a}
		q(t) = t^2 +c_1 t + c_0,  \quad  V(t) = (t - v_1)  (t - v_2),
		\end{equation}
		such that the Cauchy transform of the equilibrium measure $\lambda = \lambda_{\Gamma,\varphi}$ has the form
		\begin{equation} \label{CT-1}
		C^\lambda (z) = 
		\int_\infty^z  \frac {q(t)\, dt} {A(t)\sqrt{A(t) V(t)}}, \quad
		\end{equation} 
		where $ A(t) = (t - a_1)  (t - a_2)$, and the branch of the root in the denominator is fixed in $\C \setminus \supp(\lambda)$ by the condition  $\lim_{z\to \infty} \sqrt{AV(z)} /z^2 =1$.
		
    	\item[(ii)]  The parameters $v_1$, $v_2$, $ c_0$, $c_1$ are determined by the system of equations 
		\begin{align} \label{E1}
		&\int_{\infty}^{v_i} \frac {q(t)\, dt} {A(t)\sqrt{A(t) V(t)}}   = k_i, \quad i =1,2 , \\
		\label{E2}
	&	\Re \left(  \frac 1\pi  \int_{v_i}^{a_i} \frac{t-a_i}{t-a_j} \frac {q(t)\, dt} {\sqrt{A(t) V(t)}} \right) =0 ,
		\quad i, j =1,2,\ i\ne j , \\
		 \label{E3}
		&\Im \left(  \frac 1\pi  \int_{v_1}^{a_1} \frac{t-a_1}{t-a_2} \frac {q(t)\, dt} {\sqrt{A(t) V(t)}} + 
		\frac 1\pi  \int_{v_2}^{a_2} \frac{t-a_2}{t-a_1} \frac {q(t)\, dt} {\sqrt{A(t) V(t)}}\right)   = 1, \\
		\label{E4}
	&	\Re \left(  \frac 1\pi  \int_{v_1}^{v_2} \frac {q(t)\, dt} {A(t)\sqrt{A(t) V(t)}} + k_1v_1 -k_2v_2\right) =0   .
		\end{align}
			\item[(iii)]  Furthermore, $\supp(\lambda) = \Gamma_1 \cup \Gamma_2$ where $\Gamma_j$ is a critical trajectory of the quadratic differential $-R_j(z)  dz^2$, with 
		\begin{equation}
		\label{QD-1}
	R_j(z) = \left(C^\lambda + k_j\right)^2 = \left(\int_{a_j}^z  \frac {q(t)\, dt} {\sqrt{A^3(t) V(t)}}\right)^2 .
		\end{equation}
		Function $R_1$ is holomorphic in the left half plane $\C_-$, while  $R_2$   is holomorphic in $\C_+$.
	\end{itemize}
\end{thm}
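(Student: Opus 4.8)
The plan is to treat \eqref{Ort}--\eqref{C} with the weight \eqref{w} as the pseudo-vector problem of Subsection~\ref{sec:pseudo}. Since the integrand is entire, \eqref{Ort} is equivalent to $\int_{F_1}Q_nz^ke^{-k_1nz}\,dz+\int_{F_2}Q_nz^ke^{-k_2nz}\,dz=0$ with $F_1,F_2\in\mc F$ chosen independently (cf.\ \eqref{orthcondpseudoNew}), so the two external fields are the \emph{linear} functions $\Phi_1(z)=k_1z$, $\Phi_2(z)=k_2z$. Because $\supp(\lambda)$, $\lambda=\lambda_{S,\varphi}$, does not meet the imaginary axis, we split $\lambda=\mu_1+\mu_2$ with $\mu_1=\lambda|_{\C_-}$, $\mu_2=\lambda|_{\C_+}$; on $\Gamma_1:=\supp(\mu_1)\subset\C_-$ the relevant field is $k_1\Re z$ and on $\Gamma_2:=\supp(\mu_2)\subset\C_+$ it is $k_2\Re z$. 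By hypothesis $\Gamma_1\cup\Gamma_2$ has at most two components, so each $\Gamma_j$ is a single analytic arc (if one is empty, $\lambda$ lives in one half-plane and the assertion is exactly the theorem of \cite{Gonchar:87}, as in Example~\ref{exampleSaffConf}; so assume both nonempty). Then $a_j$ is an endpoint of $\Gamma_j$, a hard edge at which the density blows up like $|z-a_j|^{-1/2}$, and we denote by $v_j$ the other endpoint, a soft edge where the density vanishes like $|z-v_j|^{1/2}$; put $V(z)=(z-v_1)(z-v_2)$, a monic quadratic (with a repeated root in the degenerate one-arc sub-case).

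For part~(i) the key point is that a linear field disappears after one differentiation. On each open arc $\Gamma_j$ the equilibrium and $S$-property relations \eqref{equilibriumPseudo}--\eqref{SpropertyPseudo} give $(C^\lambda+k_j)_+=-(C^\lambda+k_j)_-$, i.e.\ $(C^\lambda)_++(C^\lambda)_-\equiv-2k_j$ is constant along $\Gamma_j$; differentiating along $\Gamma_j$ then yields $(C^\lambda)'_++(C^\lambda)'_-=0$ there, so $\bigl((C^\lambda)'\bigr)^2$ has no jump across $\Gamma_1\cup\Gamma_2$ and extends holomorphically to $\C\setminus\{a_1,a_2,v_1,v_2\}$. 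Since $(C^\lambda)'(z)\sim z^{-2}$ at $\infty$ and, by the local behaviour above, $\bigl((C^\lambda)'\bigr)^2$ has a pole of order $\le 3$ at each $a_j$ and $\le 1$ at each $v_j$, the product $A^3V\bigl((C^\lambda)'\bigr)^2$ is entire and grows like $z^4$, hence is a polynomial of degree $4$ with leading coefficient $1$; and since $C^\lambda$ — so $(C^\lambda)'$ — is single-valued in $\C\setminus(\Gamma_1\cup\Gamma_2)$ while $\sqrt{A^3V}$ is single-valued there too (its four branch points being paired off by the cuts $\Gamma_1,\Gamma_2$), that polynomial has only zeros of even multiplicity and thus equals $q(z)^2$ with $q(z)=z^2+c_1z+c_0$. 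Hence $(C^\lambda)'(z)=q(z)/\bigl(A(z)\sqrt{A(z)V(z)}\bigr)$ with the branch fixed by $(C^\lambda)'(z)\sim z^{-2}$, and integration from $\infty$ gives \eqref{CT-1}.

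For part~(iii), the critical-measure machinery of \eqref{eq4.4}--\eqref{eq4.5} (see \cite[Thm.~5.1]{MR2770010}) shows that $\Gamma_j$ is a critical trajectory of $-R_j(z)(dz)^2$, $R_j:=(C^\lambda+k_j)^2$, with density $\tfrac1\pi|\sqrt{R_j}\,dz|$; since $C^\lambda$ is holomorphic off $\Gamma_1\cup\Gamma_2$ and $\Gamma_2\subset\C_+$, the $S$-property across $\Gamma_1\subset\C_-$ and the soft-edge condition at $v_1$ make $R_1$ holomorphic in $\C_-$ (with at most a simple pole at $a_1$), and symmetrically $R_2$ in $\C_+$. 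The representation $R_j=\bigl(\int_{a_j}^z q(t)\,dt/\sqrt{A^3(t)V(t)}\bigr)^2$ of \eqref{QD-1} follows from \eqref{CT-1} plus the remark that the Puiseux expansion of $C^\lambda+k_j$ at the hard edge $a_j$ has no constant term — otherwise $R_j$ would carry a $(z-a_j)^{-1/2}$ term and fail to be single-valued there — so $C^\lambda+k_j$ is exactly the (regularized) antiderivative of $q/\sqrt{A^3V}$ based at $a_j$. As for part~(ii), the unknowns are $v_1,v_2,c_0,c_1$, i.e.\ eight real parameters, and they are pinned down by eight real equations expressing the defining properties of $\lambda$: two complex (four real) are the variational/soft-edge conditions singling out $v_i$ as the free endpoint of $\Gamma_i$, which via \eqref{CT-1} become \eqref{E1}; the other four real ones state that $\Gamma_1,\Gamma_2$ are genuinely realizable as trajectories of $-R_1(dz)^2$, $-R_2(dz)^2$ joining $a_j$ to $v_j$ (purely-imaginary-period conditions, \eqref{E2}), that the equilibrium constant in \eqref{equilibriumPseudo} is the \emph{same} on both arcs — which, since $\Phi_1-\Phi_2=(k_1-k_2)z$, yields the term $k_1v_1-k_2v_2$ of \eqref{E4} — and a final period/normalization identity \eqref{E3}; each is obtained by integrating the explicit form \eqref{CT-1} along the relevant contour and invoking \eqref{eq4.5}.

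The hard part will be part~(ii): writing the system \eqref{E1}--\eqref{E4} down in exactly the stated form. One must enumerate the constraints and check they amount to precisely eight independent real equations for the eight real unknowns; then, for each, fix the exact integrand and contour, carefully tracking the branch of $\sqrt{AV}$ chosen in part~(i), the orientation of the arcs, and the regularization at the hard edges $a_1,a_2$; and finally extract the flux and equilibrium-constant relations \eqref{E2}--\eqref{E4} from the pseudo-vector equilibrium characterization \eqref{equilibriumPseudo} with a piecewise-linear field. Parts~(i) and~(iii) are, by contrast, a fairly routine consequence of the $S$-property once one exploits that a single differentiation kills a linear external field, reducing the whole structure to a rational identity.
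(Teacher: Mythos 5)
Your proposal is correct and takes essentially the same route as the paper: the pseudo-vector reformulation with linear fields, the observation that differentiation kills the field combined with the $S$-property and \eqref{variationalEqPseudoBis} to force the algebraic form \eqref{CT-1}, the soft-edge vanishing $(C^\lambda+k_j)(v_j)=0$ for \eqref{E1}, the real-part (trajectory-existence) condition for \eqref{E2}, unit mass for \eqref{E3}, equality of the equilibrium constants for \eqref{E4}, and the critical-measure/quadratic-differential machinery for part (iii). In fact you give more detail than the paper's short argument for parts (i) and (iii) (including the simple pole of $R_j$ at $a_j$ and the regularization in \eqref{QD-1}), while leaving the explicit derivation of the exact integrand/contour form of \eqref{E1}--\eqref{E4} at the same level of sketch as the paper itself.
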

\begin{proof}
It follows the lines of \cite[p.~323]{Gonchar:87}. The $S$-property, the fact that $\varphi''\equiv 0$ and the variational equation \eqref{variationalEqPseudoBis} imply the general form  \eqref{CT-1} of $C^\lambda$. Equation \eqref{E1} means that
$$
(C^\lambda + k_j)(v_j)=0, \quad j=1, 2,
$$
which follows again from \eqref{variationalEqPseudoBis}.

Equation \eqref{E2} is equivalent to
$$
\Re   \int_{v_j}^{a_j} \left(C^\lambda(t) + k_j\right) dt =0 ,
\quad j =1,2, 
$$
which is a necessary condition for the existence of trajectories joining $a_j$ with $v_j$, while \eqref{E3} is just an expression of the fact that $\lambda$ is a unit measure.
Finally, \eqref{E4} imposes the equality of the equilibrium constants, \eqref{omegaequal}.  
\end{proof}

\begin{remark}
The existence of the $S$-curve in the assumptions of Theorem~\ref{thm-3} also implies that arcs of $\supp(\lambda)$ may be completed to a curve from $\mc F$ preserving the inequalities in \eqref{equilibriumPseudo}.

On the other hands, the existence of a solution of \eqref{E1}--\eqref{E4} does not guarantee a priori the existence of the necessary critical trajectories of $-R_j(z)  dz^2$. In our case we can prove it using the homotopy arguments, starting with the totally symmetric case, analyzed next.
\end{remark}

\begin{example}[\textit{The totally symmetric case}]
Assume that
  \begin{equation}
  \label{valuesak}
  a = - a_1 = a_2 >0, \qquad k = k_1 = - k_2 > 0 
  \end{equation}
(see Figure~\ref{FigCosReal}, left, with $a=1$ and $k=0.7$), so that $\lambda_{S,\varphi}$ its just the equilibrium measure of the segment $[-a,a]$ in the external field $\varphi(x) = - k|x|$, and its existence and uniqueness is guaranteed by the classical theory, see e.g.~\cite{Saff:97}. 
  
  As usual, the main step consists in finding the support $ \Gamma_\lambda$ of $\lambda=\lambda_{S,\varphi}$, which in this case, for any values of the parameters $a$ and $k$ in \eqref{valuesak}, takes the form
   \begin{equation}
   \label{Sym-1}
   \Gamma_\lambda = [-a, -v] \cup [v, a], \quad\text{ where}\quad v = v(a, k) \in (0,a)  ;
   \end{equation}  
   the fact that $0\notin \Gamma_\lambda$ follow from \cite{MR3262924}, as well as from the calculations below.   
  Once the support is determined, we can find the expression for the Cauchy transform $C^\lambda$ of $\lambda$, and from there to recover the equilibrium measure (and by Theorem~\ref{thm2}, the asymptotics of $\chi(Q_n)$) by Sokhotsky--Plemelj's formula. Notice that the $S$-property is satisfied simply by symmetry reasons, so the existence of an $S$-curve for any such $a, k$ is now automatic.
  
The assertion of Theorem~\ref{thm-3} boils down now to
\begin{equation}
\label{Sym-2a}
C^\lambda (z) =  
\int_b^z  \frac {q(t)\, dt} {\sqrt{A^3(t) V(t)}}, \quad z \in \C\setminus \Gamma_\lambda,
\end{equation}  
where 
\begin{equation}
\label{numer}
q(z) := z^2 +c_0, \quad A(z) =  z^2 - a^2, \quad  V(z) = z^2 - v^2,
\end{equation}  
and the branch of $\sqrt{A^3 V}$ in $\C\setminus \Gamma_\lambda$ is positive for $z \in (-v,v)$.
From these expressions we get $\lambda$, which is absolutely continuous on $\Gamma_\lambda$:
 \begin{equation}
 \label{Sym-2b}
 \lambda'(x) = \lambda'(-x) =\frac 1\pi \int_b^x \frac {|q(t)| \,dt}{|\sqrt{A^3(t)  V(t)}|}
 = \frac 1{\pi i}\int_v^x \frac {q(t)\,dt}{(\sqrt{A^3(t) V(t)})_{+}}, \quad x \in [v,a],
 \end{equation} 
where as usual, $(\sqrt{A^3 V})_{+}$ are the boundary values of this function on $S_\lambda$ form the upper half plane.

The two real parameters $c_0$ and $v$ in \eqref{numer} are defined by the equations 
 \begin{equation}
 \label{Sym-3}
 \int_{-v}^v  \frac {q(t)\, dt} {\sqrt{A^3(t) V(t)}} = \int_{-v}^v (C^\lambda (x))'\, dx = 2kv, \quad \int_v^a \lambda'(x)\, dx = \frac 12 ;
 \end{equation}
 the first identity expressing the equality of the equilibrium constants on both components of $\Gamma_\lambda$, and the second one assuring that $\lambda$ is a probability measure, with its mass distributed evenly on $[-a, -v] $ and $ [v, a]$.
 
 For every $a> 0$ and  $k\geq 0$ there is a unique solution $(c_0, v)$ of this system, and hence, a unique solution of the $S$-problem for this case.  

\end{example} 

\begin{example}[\textit{A more general symmetric case}]  
Now assume that 
  \begin{equation}
  \label{Sym-1assym}
  a = - a_1 = a_2 \in \C\setminus \R, \quad \Re a>0, \qquad k = k_1 = - k_2 > 0 .
  \end{equation}

  We can no longer guarantee the existence of an $S$-curve in class $ \mc{T}_{\mc{A}}$ of curves connecting $a_1$ and $a_2$ for all values of the parameters $a, k$. However, it is intuitively clear that for $\Im a$ small enough, an $S$ curve exists, is discontinuous, and it still lies in the domain where $\varphi$ is harmonic (cf.~Figure~\ref{FigCosReal}, right, with $a=1+0.1 i$ and $k=0.7$).  
  It turns out that the formulas \eqref{Sym-1}, \eqref{Sym-2a} and \eqref{Sym-2b} remain valid with obvious modifications and equations for the parameters are similar to those in \eqref{Sym-3}.
  
  For other values of the parameters the limit distribution $\lambda$ of the zeros of $Q_n$ can be either a continuum, cutting the imaginary axis in a single (or a finite number of) point (cf.~Figure~\ref{FigAssym1}, left, with $a_2=-a_1=0.5+0.75 i$ and $k=0.13$), or it can split into disjoint arcs, each or some of them crossing the imaginary axis (cf.~Figure~\ref{FigAssym1}, right, with $a_2=-a_1=0.5+0.75 i$ and $k=1$). In this case we expect these arcs to be the support of the equilibrium measure for the pseudo-vector problem described in Section~\ref{sec:pseudo}.
  
\end{example}

We conclude our exposition with the remark that the reformulation of the orthogonality conditions \eqref{orthcondpseudo}--\eqref{Cpseudo} as \eqref{orthcondpseudoNew} could be crucial for the application of the nonlinear steepest descent analysis of Deift-Zhou \cite{MR2000g:47048} for the study of the strong (and in consequence, also weak-*) asymptotics of $Q_n$'s, especially in the most elusive cases when the $S$-property is apparently lost. We plan to address this problem in a forthcoming publication.


\section*{Acknowledgments}
This paper is based on a plenary talk at the FoCM conference in Montevideo, Uruguay, in 2014 by the first author (AMF), who is very grateful to the organizing committee for the invitation. This work was completed during a stay of AMF as a Visiting Chair Professor at the Department of Mathematics of the Shanghai Jiao Tong University (SJTU), China. He acknowledges the hospitality of the host department and of the SJTU.

AMF was partially supported by the Spanish Government together with the European Regional Development Fund (ERDF) under grants
MTM2011-28952-C02-01 (from MICINN) and MTM2014-53963-P (from MINECO), by Junta de Andaluc\'{\i}a (the Excellence Grant P11-FQM-7276 and the research group FQM-229), and by Campus 
de Excelencia Internacional del Mar (CEIMAR) of the University of Almer\'{\i}a.  

We are grateful to S.~P.~Suetin for allowing us to use the results of his calculations in Figure~\ref{Fig1HermitePade}, as well as to an anonymous referee for several remarks that helped to improve the presentation.



%

\def\cprime{$'$}

\end{document}